\title{Fox-Neuwirth cell structures and the cohomology of symmetric groups}
\newtheorem{thm}{Theorem}[section]
\newtheorem{cor}[thm]{Corollary}
\newtheorem{lem}[thm]{Lemma}
\newtheorem{prop}[thm]{Proposition}
\theoremstyle{definition}
\newtheorem{defn}[thm]{Definition}
\newcommand{\sq}{{Sq}}
\newcommand{\indec}{{\rm Indec}}
\newcommand{\amod}{{\mathcal A}-{\rm Mod}}
\newcommand{\aalg}{{\mathcal A}-{\rm Alg}}
\newcommand{\al}{{\rm Alg}}
\newcommand{\into}{\hookrightarrow}
\newcommand{\R}{{\mathbb R}}
\newcommand{\Z}{{\mathbb Z}}
\newcommand{\ccup}{\smile}
\newcommand{\tr}{\odot}
\newcommand{\x}{{\mathbf x}}
\newcommand{\y}{{\mathbf y}}
\newcommand{\z}{{\mathbf z}}
\newcommand{\eh}{{\mathbf a}}
\newcommand{\bb}{{\mathbf b}}
\newcommand{\si}{{\mathcal S}}
\newcommand{\sgn}{{\rm sgn}}
\newcommand{\fn}{{\rm FN}}
\newcommand{\F}{{\mathbb F}}
\newcommand{\Symm}{\text{Symm}}
\newcommand{\Sh}{\text{Sh}}
\newcommand{\refT}[1]{Theorem~\ref{T:#1}}
\newcommand{\refP}[1]{Proposition~\ref{P:#1}}
\newcommand{\refF}[1]{Figure~\ref{F:#1}}
\newcommand{\Conf}[2]{\textrm{Conf}_{#1}(\mathbb{R}^{#2})}
\newcommand{\UConf}[2]{\overline{\textrm{Conf}}_{#1}(\mathbb{R}^{#2})}
\begin{document}

\author[C. Giusti]{Chad Giusti}
\address{Mathematics Department, 
Willamette University}
\email{cgiusti@willamette.edu}
\author[D. Sinha]{Dev Sinha}
\address{Mathematics Department, 
University of Oregon}
\email{dps@math.uoregon.edu}

\begin{abstract}
We use the Fox-Neuwirth cell structure for one-point compactifications of configuration spaces as the starting 
point for understanding our recent calculation of the mod-two cohomology of symmetric groups.  We then use that
calculation to give short proofs of classical results on this cohomology due to Nakaoka and to Madsen.
\end{abstract}

\maketitle

\section{Introduction}

Group cohomology touches on a range of subjects within algebra and topology.  It is thus amenable to study
by a similar range of techniques, as we have found in our recent work on the cohomology of symmetric groups.
The key organizational tool is the algebraic notion of a Hopf ring.   But it is the geometry of bundles and 
that of the corresponding
models for classifying spaces which have guided us, and in particular led to  rediscovery of the 
Hopf ring structure.

We consider cohomology of symmetric groups as giving characteristic classes for finite-sheeted covering maps.
Equivalently, we consider the configuration space models for their classifying spaces.  There is a strong analogy with characteristic classes for vector bundles and the geometry of Grassmannians.   In the
vector bundle setting, we embed
a bundle over a manifold base space in a trivial bundle and define characteristic classes which are Poincar\'e dual to the
locus where the fiber intersects the standard flag in subspaces with a prescribed set of dimensions.  These cohomology classes
are pulled back from classes represented by Schubert cells.  We may use the isomorphism  $O(1) = \si_{2}$, to translate to covering
spaces.   Embed a two-sheeted covering of a manifold once again in a trivial vector bundle.  Then the 
characteristic classes, which are all powers of the first Stiefel-Whitney class,
are Poincar\'e dual to the locus where the two points in a fiber share their first $n$ coordinates.
We will see that in general characteristic classes of finite sheeted 
covering spaces are defined by loci where the points in the fiber are (nested) partitioned
into collections of points which share prescribed coordinates.

A critical component of the the theory of characteristic classes for vector bundles is the Schubert cell structure
on Grassmannians.
We begin this paper by developing an analogue for configuration spaces, namely 
the Fox-Neuwirth cell structure on their one-point compacitifications.  While this cell structure in the two-dimensional setting 
dates back fifty years  \cite{FoNe62} and was developed in all dimensions mod-two by Vassiliev \cite{Vass92},
we present what is to our
knowledge the first treatment of the differential integrally.  This cell structure is ``Alexander dual'' to one on configuration spaces
themselves which has enjoyed a renaissance of interest 
lately, being studied in a number of contexts, namely by Tamaki \cite{Tama11.1, Tama11.2} related to work on the iterated cobar 
construction \cite{Tama94, Tama06}, by Ayala and Hepworth \cite{AyHe12} in the context of higher category theory and
by  Blagojevi\'{c} and  Ziegler \cite{BlZi12} in the context of partitioning convex bodies.
After presenting the cell structure, we compute the mod-two 
cohomology of $B\si_{4}$, giving cochain representatives.  

In order to understand 
arbitrary symmetric groups, we find it essential to consider them all together, much as in homology where their direct sum 
has an elegant  description as a ``Q-ring'' \cite{BiJo97}.  In cohomology the relevant structure is that of a Hopf ring,
due to Strickland and Turner \cite{StTu97}.
This was the key organizing structure of our work with Salvatore in \cite{GSS09}.
Our construction of a model for this Hopf ring structure at the level of Fox-Neuwirth cochains is new.
After giving a Hopf ring presentation in terms of generators and relations, we give a graphical presentation of the resulting
monomial basis which we call the skyline diagram basis, recapping the major results from our paper \cite{GSS09} and 
connecting them with Fox-Neuwirth cochain representatives first identified by Vassiliev \cite{Vass92}.
We use the skyline diagram presentation to revisit  the structure of the 
cohomology of $S_{\infty}$ as an algebra over the Steenrod algebra, and in particular to see the Dickson 
algebras as an associated graded.

\tableofcontents

\section{The Fox-Neuwirth cochain complexes} \label{S:cellstructure}

In this section we  decompose configuration spaces into open cells, analogously to how one decomposes Grassmannians into Schubert
cells.
Consider $\mathbb{R}^{m}$ with its standard basis, and let $\Conf{n}{m}$ denote the space of configurations of $n$ distinct, labelled
points in $\R^m$ topologized as a subspace of $\R^{mn}$.  The configuration space
$\Conf{n}{m}$ admits a free, transitive action of $\si_n$, the symmetric group on $n$ letters, by permutation of the labels.
 We denote by $\UConf{n}{m}$ the quotient  $\Conf{n}{m}/\si_n$, which is the space of unlabeled configurations of $n$ points in $\R^m$.

The standard embeddings $\R^m \into \R^{m+1}$ give rise to a canonical directed system of configuration spaces
$\Conf{n}{2} \into \Conf{n}{3} \into \cdots$ whose maps are all equivariant with respect to the $\si_n$ action.  We denote the limit by $\Conf{n}{\infty}$. A map from a sphere of dimension less than  $m-1$ to $\Conf{n}{m}$ can be first null-homotoped
in $\R^{mn}$, but then by general position be null-homotoped in $\Conf{n}{m}$ itself.  Thus 
$\Conf{n}{m}$ is $m-2$ connected, so $\Conf{n}{\infty}$ is weakly contractible.  
Moreover $\Conf{n}{\infty}$ inherits a free action of $\si_n$, so it is a model for 
$E\si_n$. Thus $\UConf{n}{\infty} \simeq B\si_n$.   Indeed, $B \si_{n}$ classifies finite-sheeted covering spaces, and by
 embedding such a bundle over some paracompact base in a trivial Euclidean bundle we can define the 
 classifying map to  $\UConf{n}{\infty}$.

\subsection{Fox-Neuwirth cells} We now describe ``open''
cellular decompositions  of $\Conf{n}{m}$ and $\UConf{n}{m}$ which then define CW-structures on their
one-point compactifications.  
 These cellular decompositions are due to Fox and Neuwirth \cite{FoNe62} when $m=2$, were considered by 
 Vassiliev in higher dimensions \cite{Vass92},
 and have also been considered in the context of $E_{m}$-operads by Berger \cite{Berg96, Berg97}.  They are in some sense
Alexander dual to the Milgram decompositions of configuration spaces \cite{Milg66} (and more generally the Salvetti complex \cite{Salv87})
and the realizations of $\theta$-categories of Joyal \cite{Joya97}.
In the limit these cellular decompositions give rise to a cochain complex to compute the cohomology of symmetric groups - see \refT{fnmodel}.

The cell structure is based on the dictionary ordering of points in $\R^{m}$ using standard coordinates, which we
denote by $<$.  This ordering gives rise to an ordering of points in a configuration.

\begin{defn}
A depth-ordering is a series of labeled inequalities $i_{1} <_{a_{1}}  i_{2} <_{a_{2}} i_{3} <_{a_{3}} \cdots <_{a_{n-1}} i_{n}$, where 
the $a_{i}$ are non-negative integers and the set of $\{ i_{k} \}$ is exactly $\{1, \ldots, n\}$.

For any depth-ordering $\Gamma$ define $\Conf{\Gamma}{m}$ to be the collection of all configurations $(x_1, x_2,\dots,x_n) \in \Conf{n}{m}$ 
such that for any $p$, $x_{i_{p}} < x_{i_{p+1}}$ and their first $a_{p}$ coordinates are equal while their $(a_{p} + 1)$st coordinate must differ.  
\end{defn}

The subspace $\Conf{\Gamma}{m}$ is empty if any $a_{i}$ is greater than or equal to $m$.

\begin{thm}[after Fox-Neuwirth]\label{T:cells}
For any $\Gamma$ the subspace $\Conf{\Gamma}{m}$ is homeomorphic to a Euclidean ball of dimension $mn - \sum a_{i}$.
The images of the $\Conf{\Gamma}{m}$ are the interiors of cells in an equivariant 
CW structure on the one-point compactification  $\Conf{n}{m}^{+}$.
\end{thm}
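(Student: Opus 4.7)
The plan is to parameterize each cell $\Conf{\Gamma}{m}$ directly, check that the cells partition $\Conf{n}{m}$ bijectively, and then analyze cell closures in the one-point compactification to extract the CW structure.

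For the parameterization, I would construct any configuration in $\Conf{\Gamma}{m}$ by recording the points in the prescribed order $i_1, \ldots, i_n$. The first point $x_{i_1}$ ranges freely over $\R^m$. Having chosen $x_{i_p}$, the depth-ordering forces $x_{i_{p+1}}$ to agree with $x_{i_p}$ in its first $a_p$ coordinates, to strictly exceed $x_{i_p}$ in the $(a_p+1)$-st coordinate (parameterized by a positive real $t_p$), and to be free in its remaining $m - a_p - 1$ coordinates. This yields an explicit homeomorphism
\[
\Conf{\Gamma}{m} \;\cong\; \R^m \times \prod_{p=1}^{n-1}\bigl( (0,\infty) \times \R^{m - a_p - 1}\bigr) \;\cong\; \R^{mn - \sum a_p},
\]
an open Euclidean ball of the stated dimension. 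Conversely, any configuration lies in a unique cell: dictionary-sorting produces the sequence $(i_p)$, and the first coordinate at which consecutive points differ is well-defined (it exists because the points are distinct, which forces $a_p < m$), yielding $\Gamma$. The $\si_n$-action on labels permutes depth-orderings in the obvious way, sending $\Conf{\Gamma}{m}$ homeomorphically to the cell for the relabeled $\Gamma$, so the decomposition is equivariant.

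For the CW structure on $\Conf{n}{m}^+$, I would extend each parameterization to a characteristic map from a closed disk by compactifying the parameterizing Euclidean space. As one approaches the boundary of this disk, either some free coordinate tends to $\pm\infty$, in which case the configuration tends to the basepoint of $\Conf{n}{m}^+$, or some gap $t_p$ shrinks to zero. In the latter case, if $a_p + 1 < m$ the limit lies in the cell whose depth-ordering is obtained by increasing $a_p$ (possibly by more than one, if subsequent coordinates already agree), which is strictly lower-dimensional; and if $a_p + 1 = m$ then the two consecutive points collide and the limit is again the basepoint. Inductively the boundary sphere of each $d$-cell therefore maps into the union of cells of dimension strictly less than $d$.

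The principal obstacle is making this boundary analysis fully rigorous. One must verify that on the corners of the parameterizing polytope, where several gaps vanish simultaneously, the characteristic map is genuinely continuous into $\Conf{n}{m}^+$ and that the limit cell is correctly identified, accounting for the non-trivial jumps in the $a_p$ described above. Once continuity of the attaching maps and the identification of limit cells are in hand, closure-finiteness and the weak topology follow from the finiteness of cells in each dimension and compactness of $\Conf{n}{m}^+$, completing the verification that this is an equivariant CW decomposition.
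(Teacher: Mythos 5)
Your parameterization of $\Conf{\Gamma}{m}$ as $\R^m \times \prod_p \bigl((0,\infty)\times \R^{m-a_p-1}\bigr)$, the check that the cells partition $\Conf{n}{m}$, and the equivariance are correct and complete; note that the paper offers no proof of this theorem to compare against (it defers to Fox--Neuwirth and Vassiliev), so the burden is entirely on making the CW claim rigorous, and there your argument has a genuine gap that you partly flag yourself. ``Compactifying the parameterizing Euclidean space'' to a closed disk is not yet a construction: the one-point compactification of $\R^{mn-\sum a_p}$ is a sphere, and an arbitrary homeomorphism from the interior of a disk to the open cell need not extend continuously to the boundary sphere, which is precisely the corner problem you raise at the end. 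The fix is to exploit structure you have but do not use: $\Conf{\Gamma}{m}$ is cut out of $\R^{mn}$ by linear equations ($(x_{i_p})_j=(x_{i_{p+1}})_j$ for $j\le a_p$) and strict linear inequalities ($(x_{i_p})_{a_p+1}<(x_{i_{p+1}})_{a_p+1}$), and nothing else, so it is the relative interior of a closed convex polyhedron $\overline{P}_\Gamma$. Radial projection from an interior point gives a map $D^d\to \overline{P}_\Gamma^{+}$ that is a homeomorphism on interiors, and the characteristic map is the composite $D^d\to \overline{P}_\Gamma^{+}\to(\R^{mn})^{+}\to \Conf{n}{m}^{+}$, the last map collapsing the compactified fat diagonal via $(X\setminus A)^{+}\cong X^{+}/A^{+}$ for $A$ closed. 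Continuity, including at corners where several gaps and several free coordinates degenerate simultaneously, is then automatic, and the frontier analysis reduces to the face structure of a convex polyhedron.

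Separately, your identification of the limit cell when a gap $t_p\to 0$ as ``the cell whose depth-ordering is obtained by increasing $a_p$'' is wrong in general, and this is exactly the phenomenon that \refP{fnbdary} and \refF{bdcomponents} are about: once $x_{i_p}$ and $x_{i_{p+1}}$ agree in coordinate $a_p+1$, their dictionary order is re-decided by later coordinates, so the labels can reorder, and whole groups of points whose agreement depth with $i_p$ or $i_{p+1}$ exceeds $a_p$ get shuffled. For the CW property this imprecision happens to be harmless: all you need is that every frontier cell has strictly larger $\sum a_i$, equivalently strictly fewer edges in its tree $\tau_{\Gamma'}$, which holds because pairwise agreement depths can only increase in a limit. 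But the statement as you wrote it would propagate an error into the boundary combinatorics on which the entire cochain complex of Section~\ref{S:cellstructure} rests, so it should be corrected even though the dimension count survives.
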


Because this cell structure is equivariant, it descends to one for $\UConf{n}{m}^{+}$, 
where the cells are indexed only 
by the $a_{i}$.  But to understand the boundary structure, we need to work with ordered configurations.  While these
cells are very easy to name, their boundary structure is intricate.  Consider the cell  $\Conf{\Gamma}{m}$ with 
$\Gamma =1 < _{1} 2 <_{0} 3$.  It is immediate to see the cell labeled by $1 < _{2} 2 <_{0} 3$ on the boundary of $\Gamma$, as the 
 inequality between the second coordinates of $x_{1}$ and $x_{2}$ degenerates to 
an equality.   But the cell labeled by $2 <_{2} 1 <_{0} 3$ is also on the boundary, since the third coordinates of $x_{1}$ and $x_{2}$ are unrestricted as the second coordinates become equal.  There are even more possibilities when  the inequality between the first coordinates of $x_{2}$ (and thus $x_{1}$ as well) and $x_{3}$ degenerates to an equality.  The first inequality,  based on the second
coordiates  of $x_{1}$ and $x_{2}$, will still hold.  But the second coordinate of $x_{3}$ has been unrestricted by $\Gamma$,
so the cells labeled by $1 < _{1} 2 <_{1} 3$, $1 < _{1} 3 <_{1} 2$ and $3  < _{1} 1 <_{1} 2$ all occur on the boundary of $\Gamma$.  
See \refF{bdcomponents}.
The combinatorics of shuffles will play a central role in the boundary structure in general. 

\begin{center}
\begin{figure}
\begin{tabular}{c}

\xymatrix{
&\includegraphics[width=3cm]{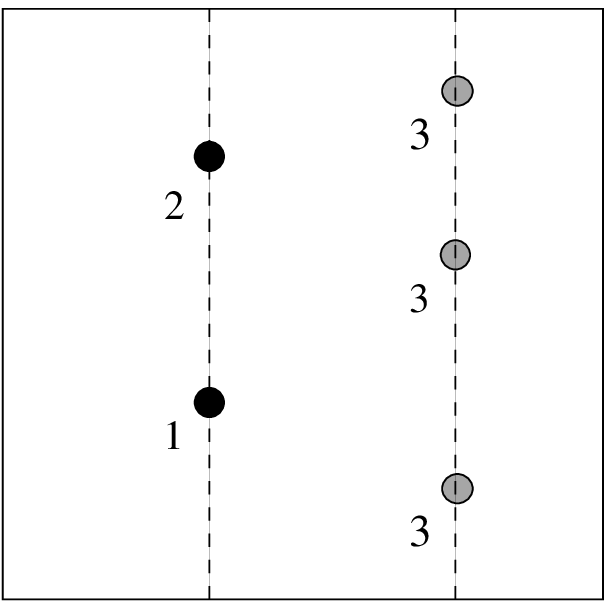}\ar[dl]\ar[d]\ar[dr]&\\
\includegraphics[width=3cm]{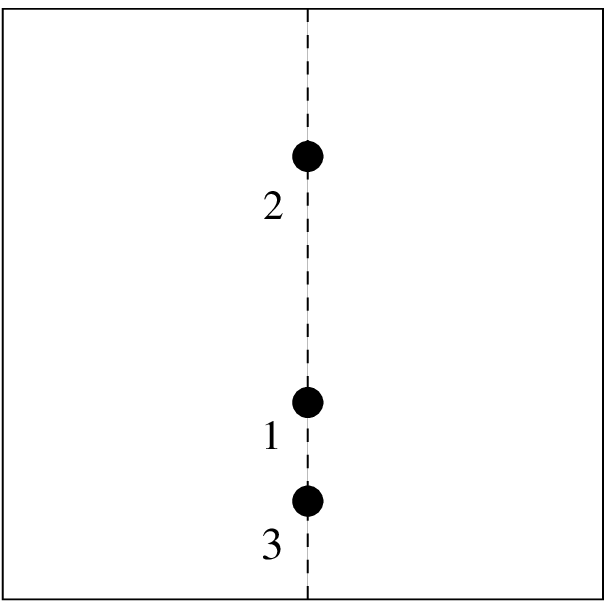}&\includegraphics[width=3cm]{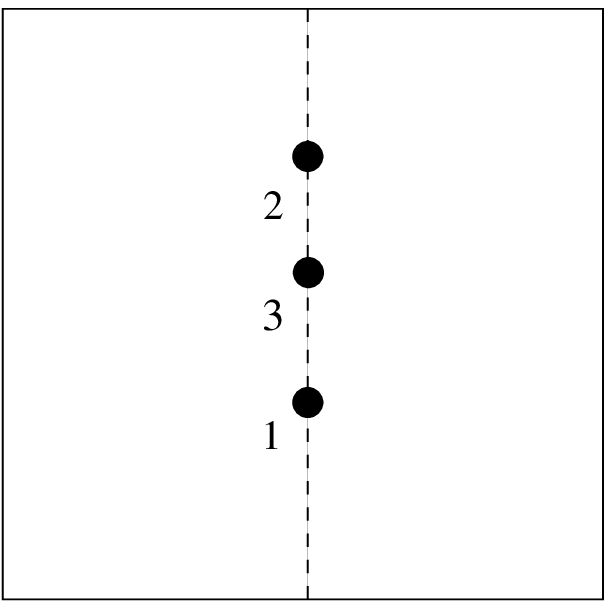}&\includegraphics[width=3cm]{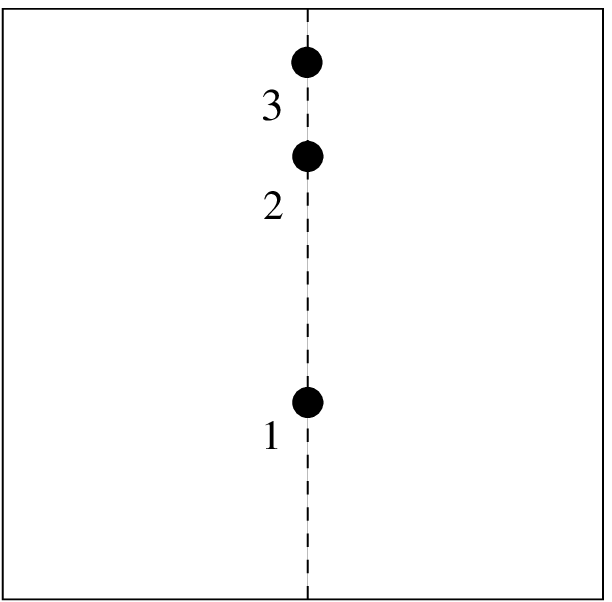}\\
}

\end{tabular}
\caption{The degeneration of the second inequality in the cell $1 <_1 2 <_0 3$ to the bounding cells $3<_1<1<_12$, $1<_1 3<_12$ and $1<_12<_13$ respectively\textmd{} .}

\label{F:bdcomponents}
\end{figure}
\end{center}

 We introduce a  data structure equivalent
to depth orderings in terms of trees in order to more easily describe the boundary combinatorics.   These trees were also used
by Vassiliev \cite{Vass92} to label this cell structure,  and  have been
of interest in higher category theory (see for example the Chapter 7 of the expository book \cite{ChLa04}).
Recall that in a rooted tree, one vertex is under another if it is in the unique path from the root to that vertex.

\begin{defn}
For a depth-ordering $\Gamma = i_{1} <_{a_{1}} i_{2 } < \cdots <_{a_{n-1}} i_{n}$ define a planar level tree $\tau_{\Gamma}$ up to isotopy as follows.
\begin{itemize}
\item There is a root vertex at
the origin, and all others vertices have positive integer-valued heights (that is, $y$-coordinates).
\item There are  $n$ leaves which are of height  $m$ labelled $i_{1}$, $i_{2}, \cdots i_{n}$ in order
from left to right (that is, in order of $x$-coordinates).
\item Under each leaf there is a unique edge from a vertex of height $i$ to one of height $i+1$ for each $0 \leq i < m$.  The leaves
labelled $i_{p}$ and $i_{p+1}$ share the edges under them 
up to height $a_{p}$, but not above this height.
\end{itemize}
\end{defn}

For example, the depth ordering $\Gamma = 3 <_2 5 <_0 1 <_3 2 <_0 4 <_1 6$ is represented by the following tree.
\[
\xymatrix{
3&5&1&2&4&6\\
\bullet\ar[u]&\bullet\ar[u]&\bullet\ar[u]\ar[ru]&&\bullet\ar[u]&\bullet\ar[u]\\
&\bullet\ar[lu]\ar[u]&\bullet\ar[u]&&\bullet\ar[u]&\bullet\ar[u]\\
&\bullet\ar[u]&\bullet\ar[u]&&\bullet\ar[u]\ar[ru]\\
&&\bullet\ar[lu]\ar[u]\ar[rru]\\
}\]

The tree $\tau_{\Gamma}$ uniquely determines $\Gamma$, so we use the two structures interchangeably.  Recall as well that the
planar isotopy class of a level tree is equivalent to an abstract level tree with an ordering (say from left to right) of the edges
incident to each vertex.

\begin{defn}
Let $\tau$ be a planar level tree.   If $e$ and $f$ are two edges  which are incident to the internal vertex $v$, consecutive in
the incident ordering with $e$ before $f$, then the edge quotient $\tau_{/(e=f)}$ is the planar level tree described as an 
abstract level tree with edge orderings as follows.
\begin{itemize}
\item Its edge set is obtained from that of $\tau$ by identifying $e$ and $f$.  We call the resulting special edge the quotient edge.
\item Its vertex set is obtained from that of
$\tau$ by identifying the terminal vertices of $e$ and $f$.  We call the resulting vertex the quotient vertex. 
\item The edges with this terminal vertex as initial are ordered consistently with their previous ordering, with those incident 
to $e$ before those incident before $f$.
\item All other incidence relations and edge orderings are transported by the bijection away from $e$ and $f$
with the edges and vertices of $\tau$.
\end{itemize}
\end{defn}

\begin{defn}
A vertex permutation at $v$ of a planar level tree $\tau$ is a tree which differs from $\tau$ only by changing the edge ordering of 
$v$.  We denote such by $\sigma_{v}  \tau$, where $\sigma_{v}$ is the permutation of edges at $v$.

When $\sigma_{v}$  is a shuffle, we call the resulting tree a vertex shuffle of $\tau$.
When the tree in question is an edge quotient $\tau_{/(e=f)}$, by convention we shuffle at the quotient vertex $v$ using the initial
partition into edges which were incident to $e$ in $\tau$ followed by edges which were incident to $f$. 
\end{defn}

We can now describe the boundary structure of Fox-Neuwirth cells.

\begin{prop}\label{P:fnbdary}
The cell $\Conf{\Gamma'}{m}$ is in the boundary of $\Conf{\Gamma}{m}$ if and only if $\tau_{\Gamma'}$ is isomorphic to
$\sigma_{v} \tau_{\Gamma /(e=f)}$ for some consecutive edges $e$ and $f$ and some shuffle $\sigma_{v}$  at the quotient
vertex.
\end{prop}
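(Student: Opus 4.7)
The plan is to work with the explicit Euclidean parametrization of $\Conf{\Gamma}{m}$ given by \refT{cells}, under which the cell sits as the locus in $\R^{mn}$ cut out by the equalities encoded in $\Gamma$ together with the open conditions arising from the strict inequalities. The codimension-one topological boundary then arises from letting exactly one of these strict inequalities degenerate to an equality while preserving all other relations.

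For the ``only if'' direction, the strict inequalities in $\Conf{\Gamma}{m}$ come in the form: for each pair of points $x_j, x_k$ whose common ancestor in $\tau_\Gamma$ lies at some height $a$, their $(a+1)$st coordinates must differ. I would first argue that if $x_j$ and $x_k$ are not consecutive in the dictionary order on the $(a+1)$st coordinate among the leaves sharing this common ancestor, then any intermediate leaf's $(a+1)$st coordinate is forced to coincide with theirs in the limit, yielding a degeneration of codimension greater than one. Hence a codimension-one degeneration corresponds exactly to identifying two consecutive outgoing edges $e, f$ at some internal vertex $v$ of $\tau_\Gamma$. In the limit, the $(a+1)$st coordinates of all leaves originally under $e$ or $f$ coincide, so these two blocks of leaves now share a common ancestor --- the quotient vertex --- at height $a+1$. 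The $(a+2)$nd coordinates of these points were generic in $\Gamma$, and specialize to any interleaving preserving the internal orders of the two blocks, which is precisely a shuffle $\sigma_v$ at the quotient vertex, yielding $\tau_{\Gamma'} \cong \sigma_v \tau_{\Gamma/(e=f)}$.

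For the ``if'' direction, given any shuffled tree $\sigma_v \tau_{\Gamma/(e=f)}$ and a target configuration $x \in \Conf{\Gamma'}{m}$, I construct an explicit family $\gamma(t) \in \Conf{\Gamma}{m}$ by adding $t > 0$ to the $(a+1)$st coordinate of every $f$-side point of $x$. For small positive $t$, the configuration $\gamma(t)$ satisfies all of $\Gamma$'s relations: the internal structure on each side is preserved (since we add the same $t$ to every $f$-side point), the $(a+1)$st coordinates of $e$-side points are strictly less than those of $f$-side points by exactly $t$ (giving depth separation $a$ between the two sides and reordering the shuffled leaves into the original $\Gamma$ order), and pairs of points outside the block under $v$ are unaffected. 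Letting $t \to 0$ exhibits $x$ as a limit of points in $\Conf{\Gamma}{m}$.

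The main obstacle is the combinatorial bookkeeping of the ``only if'' direction: one must verify that every codimension-one limiting depth ordering factors through a consecutive-edge quotient together with a shuffle at the quotient vertex, with the correct depth labels on the new inequalities --- in particular, that adjacent shuffled leaves from different sides acquire depth separation exactly $a+1$, while internal depths within each side remain those of $\Gamma$. Once this correspondence between codimension-one degenerations and (edge-quotient, shuffle) pairs is made precise, the proposition follows by reading off depth separations directly from the tree operations.
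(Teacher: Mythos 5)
Your argument is correct and follows essentially the same route the paper takes (which it only sketches via the $1<_1 2<_0 3$ example before stating the proposition): codimension-one degenerations come from letting a single strict inequality in the $(a_p+1)$st coordinate become an equality, which forces a consecutive-edge identification, frees the $(a_p+2)$nd coordinates of the two merged blocks, and hence produces exactly the vertex shuffles at the quotient vertex. Your explicit one-parameter family $\gamma(t)$ for the converse is a clean way to make precise what the paper leaves informal.
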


For example, let $m=3$ and $\Gamma = 1 <_1 2 <_0 3$.  Then $\Gamma$ is represented by the following tree. 
\[
\xymatrix{
1&2&3\\
\bullet\ar[u]&\bullet\ar[u]&\bullet\ar[u]\\
&\bullet\ar[lu]\ar[u]&\bullet\ar[u]\\
&\bullet\ar[ru]\ar[u]&\\
}
\]

$\Conf{\Gamma}{3}$ has as its boundary precisely cells $\Conf{\Gamma'}{3}$ with $\Gamma'$ represented by each of the following.

\begin{tabular}{ccccc}
\xymatrix{
1&2&3\\
&\bullet\ar[lu]\ar[u]&\bullet\ar[u]\\
&\bullet\ar[u]&\bullet\ar[u]\\
&\bullet\ar[ru]\ar[u]&\\
}
&
\xymatrix{
2&1&3\\
&\bullet\ar[lu]\ar[u]&\bullet\ar[u]\\
&\bullet\ar[u]&\ar[u]\\
&\bullet\ar[ru]\ar[u]&\\
}
&
\xymatrix{
1&2&3\\
\bullet\ar[u]&\bullet\ar[u]&\bullet\ar[u]\\
&\bullet\ar[lu]\ar[u]\ar[ru]\\
&\bullet\ar[u]&\\
}
&
\xymatrix{
1&3&2\\
\bullet\ar[u]&\bullet\ar[u]&\bullet\ar[u]\\
&\bullet\ar[lu]\ar[u]\ar[ru]\\
&\bullet\ar[u]&\\
}
&
\xymatrix{
3&1&2\\
\bullet\ar[u]&\bullet\ar[u]&\bullet\ar[u]\\
&\bullet\ar[lu]\ar[u]\ar[ru]\\
&\bullet\ar[u]&\\
}
\end{tabular}

If we had instead considered $\Conf{\Gamma}{2}$, only the three cells corresponding to the last three in the above list would appear in the boundary since the other potential cells are empty.

There is one edge quotient for each consecutive pair of leaves in the tree $\tau_{\Gamma}$, or equivalently for each inequality 
in $\Gamma$.  For each edge quotient, the number of vertex shuffles varies and is always at least two.

\subsection{The cochain complexes}
We give what is to our knowledge the first explicit formula for the differential in the resulting integral cochain complexes.
In light of \refP{fnbdary}, it remains to determine signs.
Fox-Neuwrith cells are simple to endow with coordinates and thus  
orientations.  Take the standard coordinates of $\Conf{n}{m}$ as a subspace of $\R^{mn}$, namely $(x_{1})_{1}, (x_{1})_{2}, \ldots, (x_{1})_{m},
(x_{2})_{1}, \ldots$, and omit any coordinate which from the definition of $\Conf{\Gamma}{m}$ must be 
equal to one which appears earlier in the list.
Straightforward analysis suffices to understand the difference between the orientation of a cell
and that induced by a cell which bounds it, leading to the following.

\begin{defn}\label{D:shufflesgn}
The sign of the quotient vertex of the $p$th edge quotient of $\Gamma=i_1<_{a_1}i_2<\cdots<_{a_{n-1}}i_n$, denoted $\sgn_{m} (p)$, $p\in\{1,\dots,n-1\}$ is defined as $\sgn_{m}(p, \Gamma) = (-1)^\kappa,$ where
\begin{equation*}
 \kappa = {p+\sum_{k=1}^{p-1} \left(m - \text{min}\{a_k, a_p+1\}\right) + (m - (a_p+1)) + \sum_{k=p+1}^{n-1} \left(m - \text{min}\{a_k, a_p\}\right)}.
\end{equation*}
\end{defn}

If we consider only configurations in $\mathbb{R}^m$ with $m$ even, we can delete $m$ from the definition of $\kappa$. 
We now only concern ourselves with the limit as $m$ goes to infinity.

\begin{defn}
Define the cochain complex ${ {\widetilde{\fn_n}}}^{*}$ as follows.  
As a free abelian group it is generated by depth-orderings, with  $[\Gamma]$ 
in degree $\sum a_{p}$ when $\Gamma = i_{1} <_{a_{1}} i_{2} < \cdots <_{a_{n-1}} i_{n}$.  The differential is
$$d[\Gamma] = \sum_{p} \sgn_{0}(p, \Gamma)a_{p} [\Gamma], \;\; {\text{where}} \;\;  a_{p} [\Gamma] = \sum_{\sigma_{v}} [\Gamma'].$$
The first sum is over inequalities in $\Gamma$, indexed by $p$, which in turn determine an adjacent edge pair $e,f$ 
in $\tau_{\Gamma}$.
For each such edge pair we sum
over all possible vertex shuffles of the quotient tree
at the quotient vertex, taking the depth-ordering $\Gamma'$ associated
to the resulting tree $\sigma_{v} \tau_{\Gamma /(e=f)}$.
\end{defn}

\begin{thm} \label{T:fnmodel}
${ {\widetilde{\fn_n}}}^{*}$ is a cochain model for $E \si_{n}$, so its quotient by $\si_{n}$ which we call ${ {{\fn_n}}}^{*}$ has cohomology isomorphic to that of  the $n$th symmetric group.
\end{thm}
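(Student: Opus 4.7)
The plan is to identify $\widetilde{\fn_n}^*$ with a codimension-reindexing of the cellular chain complex of $\Conf{n}{m}^+$ for each $m$, and then take the limit $m\to\infty$ to obtain a free $\Z[\si_n]$-resolution of $\Z$. By Theorem~\ref{T:cells}, $\Conf{n}{m}^+$ is a CW complex with cells $\Conf{\Gamma}{m}$ of dimension $mn - \sum a_i$. Assigning the cell $\Gamma$ to degree $\sum a_i$ matches the cellular chain group $C_{mn-k}(\Conf{n}{m}^+)$ with the span of depth-orderings having $\sum a_i = k$ and all $a_i < m$; for $m > k$ this is exactly $\widetilde{\fn_n}^k$. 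Proposition~\ref{P:fnbdary} provides the list of cells in the cellular boundary that also appear in the formula for $d$, so the remaining task is to verify the signs.

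I would orient each Fox-Neuwirth cell using the ordered list of standard coordinates of $\R^{mn}$ unconstrained by the equalities defining $\Conf{\Gamma}{m}$, and compare this with the orientation induced from an adjacent higher-dimensional cell. Degenerating the $p$-th inequality constrains the $(a_p+1)$st coordinate of the corresponding points; a subsequent vertex shuffle reassorts the surviving coordinates. In the explicit $\kappa$ of Definition~\ref{D:shufflesgn}, the term $(a_p+1)$ records the position of this newly-constrained coordinate within the surviving list, the sums involving $\min(a_k, a_p)$ and $\min(a_k, a_p+1)$ count the coordinates of other points $x_{i_k}$ that must be commuted past, the leading $p$ accounts for the signs of earlier inequalities, and a vertex shuffle contributes the standard shuffle sign on the rearranged blocks. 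For even $m$ the $m$-dependent summands vanish mod $2$, leaving the $\sgn_0$ used in the stabilized complex.

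With the cellular identification secure, Poincar\'e-Lefschetz duality for the oriented open manifold $\Conf{n}{m}$ gives an isomorphism $\tilde H_{mn-k}(\Conf{n}{m}^+) \cong H^k(\Conf{n}{m})$, so for $m$ sufficiently large $H^k(\widetilde{\fn_n}^*) \cong H^k(\Conf{n}{m})$. For fixed $k$ and $m > k+1$ the complex stabilizes in degrees $\leq k+1$, and $H^k(\Conf{n}{m}) \to H^k(\Conf{n}{\infty})$ is an isomorphism by the $(m-2)$-connectivity discussed in the introduction. Since $\Conf{n}{\infty} \simeq E\si_n$, the cohomology of $\widetilde{\fn_n}^*$ is concentrated in degree $0$. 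Because $\si_n$ acts freely and cellularly by permuting labels, $\widetilde{\fn_n}^*$ is a complex of free $\Z[\si_n]$-modules and hence (up to regrading) a free resolution of $\Z$. The quotient ${\fn_n}^* = \widetilde{\fn_n}^*/\si_n$, canonically identified with $\mathrm{Hom}_{\Z[\si_n]}(\widetilde{\fn_n}^*, \Z)$ via choice of orbit representatives, then has cohomology $\mathrm{Ext}^*_{\Z[\si_n]}(\Z, \Z) = H^*(\si_n; \Z)$.

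The main obstacle is the sign computation sketched in the second paragraph. Tracking the permutation of the coordinate basis induced by each edge quotient and vertex shuffle is combinatorially intricate, and aligning the resulting parity with the exact formula for $\kappa$ is the technical heart of the argument. The remaining steps --- duality, stabilization, and descent to the quotient --- follow standard routes once the signs are settled.
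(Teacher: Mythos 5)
Your main line of argument---reindexing the cellular chains of $\Conf{n}{m}^{+}$ by codimension, invoking duality for the open manifold to identify the result with cochains for $H^{*}(\Conf{n}{m})$, stabilizing over $m$ using the $(m-2)$-connectivity, and isolating the sign verification as the technical heart---is the same as the paper's, and like the paper you defer the sign computation rather than carry it out; that part is fine. The genuine problem is your last step, the descent to the quotient. The degreewise bijection you invoke does not give an identification of ${\fn_n}^{*}=\widetilde{\fn_n}^{*}/\si_n$ with $\mathrm{Hom}_{\Z[\si_n]}(\widetilde{\fn_n}^{*},\Z)$ as cochain complexes: coinvariants are covariant while $\mathrm{Hom}_{\Z[\si_n]}(-,\Z)$ is contravariant, so the latter complex has differentials $\mathrm{Hom}(C^{k+1},\Z)\to\mathrm{Hom}(C^{k},\Z)$ going in the opposite direction---it is the $\Z$-linear dual of the coinvariant complex, whose (co)homology differs from that of ${\fn_n}^{*}$ integrally (universal coefficients) and is graded the other way. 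Moreover, $\widetilde{\fn_n}^{*}$ is exact in the form $0\to\Z\to C^{0}\to C^{1}\to\cdots$, i.e.\ a coresolution; no regrading turns this into a projective resolution of $\Z$ (the augmentation maps out of $\Z$, not onto it), so even granting the identification, $H\bigl(\mathrm{Hom}_{\Z[\si_n]}(\widetilde{\fn_n}^{*},\Z)\bigr)$ is not $\mathrm{Ext}^{*}_{\Z[\si_n]}(\Z,\Z)$ by the standard definition, and the functor is not exact, so quasi-isomorphism with $\Z$ is not enough.

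The conclusion is nonetheless true, and there are two standard repairs. Algebraically: since each $\widetilde{\fn_n}^{k}$ is finitely generated free over $\Z$ and the coresolution splits over $\Z$ (all cocycle subgroups are free abelian), the $\Z$-dual $D_{*}=\mathrm{Hom}_{\Z}(\widetilde{\fn_n}^{*},\Z)$ is exact and is an honest free $\Z[\si_n]$-resolution $\cdots\to D_{1}\to D_{0}\to\Z\to 0$; then $\mathrm{Hom}_{\Z[\si_n]}(D_{k},\Z)\cong(\widetilde{\fn_n}^{k})_{\si_n}$ compatibly with differentials, so ${\fn_n}^{*}$ computes $\mathrm{Ext}^{*}_{\Z[\si_n]}(\Z,\Z)=H^{*}(\si_n;\Z)$. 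Geometrically, which is what the paper implicitly does: the equivariant cell structure descends to $\UConf{n}{m}^{+}$, and the same duality-plus-stability argument applied to the quotient manifold $\UConf{n}{m}$ (orientable when $m$ is even, which suffices for the limit, or with mod-$2$ coefficients in general) identifies the cohomology of ${\fn_n}^{*}$ in the stable range with $H^{*}(\UConf{n}{m})\cong H^{*}(B\si_n)$. Either patch closes the gap; as written, the final paragraph's identification would fail.
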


In algebraic terms, ${ {\widetilde{\fn_n}}}^{*}$ is a free $\Z[\si_{n}]$ resolution of the trivial module.  It is a challenge to even show
algebraically that ${ {\widetilde{\fn_n}}}^{*}$ is a cochain complex, much less that it is acyclic.

\begin{proof}[Sketch of proof]
By \refT{cells}, the $[\Gamma]$ naturally span the corresponding 
cellular chain complex for the one-point compactification $\Conf{n}{m}^{+}$, with $[\Gamma]$
in $C^{CW}_{mn - \sum a_{i}}\left(\Conf{n}{m}^{+}\right)$.  By Alexander and Spanier-Whitead duality for manifolds as explained by 
Atiyah \cite{Atiy61}, 
the resulting homology group is isomorphic to cohomology of $\Conf{n}{m}$ in degree $k = \sum a_{i}$.  These 
cochain groups are isomorphic through the maps induced by the directed system in degrees $k$ less than $m$, so the inverse
limit is just $ {\widetilde{\fn_n}}^{k}$.  

The main work in proof is then to show that the boundary homomorphisms 
have the indicated signs. 
\end{proof}

We now consider more closely the quotient complex ${ {\fn_n}}^{*}$ whose cohomology is that of $B \si_{n}$.  
The equivalence class of some $\Gamma = i_{1} <_{a_{1}} < i_{2}< \cdots <_{a_{n-1}} i_{n}$ modulo $\si_{n}$, which 
acts by permuting the $i_{k}$, is given by the subscripts of the inequalities which we now denote
$\mathbf{a} = [a_{1}, \ldots, a_{n-1}]$.  These correspond to trees $\tau_{\mathbf{a}}$ as described above but with leaves now unlabelled. Thus ${ {\fn_n}}^{*}$ is significantly smaller
than the bar complex, having rank in degree $k$ of the number of partitions of $k$ into  $n-1$ non-negative integers.  
We express the differential of ${ {\fn_n}}^{*}$ in terms of $\mathbf{a}$.

\begin{defn} 
An $\ell$-block of $\mathbf{a}$ is a maximal (possibly empty) sub-sequence of consecutive $a_i$ greater 
than $\ell$ in $\mathbf{a}$. Denote the ordered collection of all $\ell$-blocks of $\mathbf{a}$ by $B_\ell(\mathbf{a})$.
\end{defn}

For example, $B_0([0,2,6,0,1]) = (\emptyset, [2,6], \emptyset, [1])$ and its $B_1$ is $(\emptyset, [2,6], \emptyset, \emptyset)$. These collections $B_\ell(\mathbf{a})$  correspond to the forests of rooted trees obtained by deleting from $\tau_{\mathbf{a}}$ all vertices of height lower than $\ell$ and their incident edges. 

\begin{defn}
Let $\mathbf{a} \in { {\fn_n}}^{k}$. Denote by $\mathbf{a}[i]$ the element 
$[a_1,a_2,\dots,a_{i-1}, a_i+1, a_{i+1},\dots,a_{n-1}] \in { {\fn_n}}^{k+1}$.

Let $a_i \in \mathbf{a}$ and denote by $\mathbf{A}_i$ the $a_i$-block of $\mathbf{a}[i]$ containing $a_i$. Such a block $\mathbf{A}_i$ corresponds to a rooted subtree $\tau_{\mathbf{A}_i}$ of $\tau_{\textbf{a}[i]}$.
Let $N = \# B_{a_i+1}(\mathbf{A}_i)$, the number of trees in the forest obtained by removing the root from $\tau_{\mathbf{A}_i}$ along with its incident edges, and let $k$ to be the number of trees in this forest whose roots were incident to the edge $e$ in the tree $\tau_{\textbf{a}}$ before the edge quotient $e=f$ which produced $\tau_{\textbf{a}[i]}$. Define $\Sh(\mathbf{a},i)$ to be the set of $(k, N-k)$-shuffles.  
\end{defn}

$\Sh(\mathbf{a}, i)$ acts on $\mathbf{a}[i]$ by shuffling the elements of $B_{a_i+1}(\mathbf{A_i})$. Equivalently, it acts via vertex shuffles on the tree $\tau_{\mathbf{a}[i]}$. 


\begin{prop} \label{P:coboundarymap} The differential in ${ {\fn_n}}^{*}$ is given by
\begin{equation*}
\delta(\mathbf{a}) = \sum_{i=1}^{n-1} (-1)^{i + \alpha(i)} \delta_{i} (\mathbf{a}) \;\;
\text{where} \;\;  \delta_{i} (\mathbf{a})=  \sum_{\sigma \in \Sh(\mathbf{a},i)} (-1)^{\kappa(\sigma, \mathbf{a})} \sigma\cdot\mathbf{a}[i].
\end{equation*}
If we define $h(\mathbf{v})$ to be the height in the tree $\tau_{\mathbf{a}}$ of the vertex $\mathbf{v}$ and $\#(\sigma, h)$ to be the total number of transpositions of vertices at height $h$ in $\tau_{\mathbf{A}_i}$ which occur when $\sigma$ acts on the tree, then we have

\begin{equation*}
\alpha(i) = \sum_{j=1}^{i-1} \text{min}\{a_j, a_i+1\} + a_i+1 + \sum_{j=i+1}^{n-1}\text{min}\{a_j, a_i\},
\;\; \text{and} \;\;  
\kappa(\sigma, \mathbf{a}) = \sum_{\mathbf{v}} (\#(\sigma, h) - \#(\sigma, h-1))\cdot h(\mathbf{v}),
\end{equation*}
where the second sum is indexed over vertices $\mathbf{v}$ in the subtree $\tau_{\mathbf{A}_i}$.
\end{prop}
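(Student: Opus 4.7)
The strategy is to derive the formula directly from the differential on $\widetilde{\fn_n}^*$ given just before \refT{fnmodel}, by passing to the $\si_n$-quotient and rewriting everything in terms of the unlabeled sequence $\mathbf{a} = [a_1, \ldots, a_{n-1}]$ and its planar level tree $\tau_{\mathbf{a}}$. The index $p$ of an inequality in a depth-ordering $\Gamma$ whose orbit lies over $\mathbf{a}$ matches the entry-index $i$, and one edge quotient of $\tau_\Gamma$ at position $p$ followed by a vertex shuffle produces a labeled tree whose $\si_n$-orbit is represented by $\sigma \cdot \mathbf{a}[i]$ for some $\sigma \in \Sh(\mathbf{a}, i)$; the whole task is therefore to account for signs.

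The global sign $(-1)^{i+\alpha(i)}$ arises by expanding $\sgn_0(p, \Gamma)$ from \refD{shufflesgn}. Working modulo $2$, the exponent becomes
\begin{equation*}
p + \sum_{k=1}^{p-1} \min\{a_k, a_p+1\} + (a_p+1) + \sum_{k=p+1}^{n-1} \min\{a_k, a_p\},
\end{equation*}
which agrees with $i + \alpha(i)$ under $p = i$. That $\Sh(\mathbf{a}, i)$ enumerates the distinct vertex shuffles appearing in the quotient complex follows from \refP{fnbdary} together with the observation that passing to the $\si_n$-quotient identifies two vertex shuffles at the quotient vertex exactly when they produce the same ordering of the $N$ subtrees up to the partition into the $k$ subtrees originally incident to $e$ and the $N - k$ originally incident to $f$; the coset representatives are precisely $(k, N-k)$-shuffles.

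The principal obstacle is verifying the shuffle sign $\kappa(\sigma, \mathbf{a})$. In $\widetilde{\fn_n}^*$ every vertex shuffle at a given quotient vertex contributes with the \emph{same} sign $\sgn_0(p, \Gamma)$; however, in ${\fn_n}^*$ two labeled shuffles related by a permutation $\rho \in \si_n$ give rise to oriented cells that differ by the sign of the permutation of standard coordinates $(x_r)_s$ induced by $\rho$. After choosing the representatives prescribed by $\Sh(\mathbf{a}, i)$, one reads off this sign shuffle-by-shuffle. I would analyze it by height: a transposition of two subtrees of $\tau_{\mathbf{A}_i}$ that first separate at height $h$ interchanges blocks of coordinates whose combined length equals the sum of the heights of the vertices in those subtrees, and its sign decomposes as a product of contributions indexed by those vertices. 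Grouping by vertex $\mathbf{v}$ and telescoping the difference between the number of transpositions involving $\mathbf{v}$ at height $h(\mathbf{v})$ versus at height $h(\mathbf{v}) - 1$ yields the compact form
\begin{equation*}
\kappa(\sigma, \mathbf{a}) = \sum_{\mathbf{v}} \bigl(\#(\sigma, h(\mathbf{v})) - \#(\sigma, h(\mathbf{v})-1)\bigr) \cdot h(\mathbf{v}).
\end{equation*}
Verifying this telescoping carefully, and checking that it matches the orientation-permutation sign for every $\sigma \in \Sh(\mathbf{a}, i)$, is the one genuinely combinatorial step; the remainder of the argument is assembly of the pieces already identified.
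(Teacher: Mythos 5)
The first thing to say is that the paper itself offers no proof of \refP{coboundarymap}: the sketch of \refT{fnmodel} explicitly defers the verification that ``the boundary homomorphisms have the indicated signs,'' and the proposition is followed only by sample computations. So there is no argument of record to compare yours against, and I can only judge the proposal on its own terms. Your overall strategy is the right one, and two of its three ingredients are genuinely established: the reduction of $\sgn_0(p,\Gamma)$ modulo $2$ to $(-1)^{i+\alpha(i)}$ is a correct (and essentially trivial) computation from \refD{shufflesgn}, and the enumeration of the terms of $a_p[\Gamma]$ by $\Sh(\mathbf{a},i)$ follows from \refP{fnbdary} --- though note that no ``identification of shuffles in the quotient'' is needed: the sum in $\widetilde{\fn_n}^*$ is by convention already over $(k,N-k)$-shuffles at the quotient vertex, and these biject with $\Sh(\mathbf{a},i)$.

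The gap is exactly where you flag it, and it is not a small one: the formula for $\kappa(\sigma,\mathbf{a})$ is the entire nontrivial content of the proposition, and your proposal asserts it rather than derives it. Two points need real work. First, cell orientations are defined by the retained coordinates $(x_j)_s$ listed in order of the labels $j$, with omissions dictated by the depth-ordering, whereas a vertex shuffle permutes subtrees of $\tau_{\Gamma/(e=f)}$, i.e.\ edges of the tree; the sign you want is therefore the composite of the block-transposition signs for moving subtrees past one another \emph{and} the sign of the relabeling $\rho\in\si_n$ required to return $\Gamma'$ to the standard representative of its orbit, and your sketch conflates these. Second, the claimed telescoping --- that grouping contributions by vertex produces precisely $\sum_{\mathbf{v}}\bigl(\#(\sigma,h(\mathbf{v}))-\#(\sigma,h(\mathbf{v})-1)\bigr)\cdot h(\mathbf{v})$ --- is stated as a conclusion but never checked. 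The weight $h(\mathbf{v})$ presumably arises from counting the $m-h$ coordinates lying above each vertex in the finite-dimensional model and then setting $m$ even, but each non-root vertex carries exactly one free coordinate, so it is not a priori clear why the count is weighted by height rather than by one per vertex; this is exactly the kind of discrepancy that only an explicit computation (or at minimum a check against the paper's worked examples, e.g.\ the signs in $\delta([1,0,2,2])$ and $\delta([0,2,1,2])$) would catch. Until that verification is carried out, the proposal is an outline of a proof rather than a proof.
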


The signs and the differential as a whole do not seem to simplify, as evidenced by the following examples of computations,
which spite of the superficial similarity in cell names have substantially different boundaries.

\begin{eqnarray*}
\delta([2,0,1,2]) &=& - 3 [2,0,2,2] - 1 [2,1,1,2] \\
\delta([1,0,2,2]) &=& - 1 [1,1,2,2] + 1 [1,2,2,1] + 2 [2,0,2,2] - 1 [2,2,1,1]\\
\delta([0,2,1,2]) &=& - 6 [0,2,2,2] - 1 [1,2,1,2] + 1 [2,1,1,2] - 1 [2,1,2,1]\\
\delta([0,1,2,2]) &=& - 4 [0,2,2,2] - 1 [1,2,2,1].
\end{eqnarray*}

The first author has coded a program in Java to compute these differentials.  It is appended at the end of the .tex file 
which was uploaded for the arXiv preprint of this paper, and is also available on his web site.

\section{The cohomology of $B\si_{4}$}


In this section we establish the following.

\begin{thm} \label{T:bs4basis} There is an additive basis for $H^*(B\si_4;\F_2)$ with representatives given by elements of
${\fn_{4}}^{*}$ of the following forms
\begin{itemize}
\item $[a,0,0] + [0,a,0] + [0,0,a]$, with $a > 0$
\item  $[a, 0, b] + [b,0,a]$, with $0 < a < b$ 
\item  $[b, a, b]$, with $0 \leq a \leq b$.
\end{itemize}
\end{thm}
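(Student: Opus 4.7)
The plan is to verify that the listed cochains are mod-two cocycles, and then appeal to a Poincar\'e series comparison to conclude they form a basis for the cohomology. Working mod $2$ simplifies the differential from Proposition~\ref{P:coboundarymap} considerably: all signs vanish, and many of the shuffled terms $\sigma\cdot\mathbf{a}[i]$ coincide as elements of ${\fn_4}^*$, so the total contribution of a single shuffle orbit is essentially a binomial coefficient $\binom{N}{k}$ mod $2$ that frequently vanishes. A further organizing symmetry is the reflection involution $[a_1,a_2,a_3] \mapsto [a_3,a_2,a_1]$ induced by $x_1 \mapsto -x_1$ on $\R^m$, which is a chain map mod $2$; each of the three listed families is invariant under this reflection.

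For each family in turn, I would verify $\delta = 0$ mod $2$. For $[a,0,0]+[0,a,0]+[0,0,a]$, the three summands' coboundaries produce cells of the same combinatorial shape in different sequence positions, and the sum cancels after accounting for shuffle multiplicities. For $[a,0,b]+[b,0,a]$ with $0<a<b$, the two summands are exchanged by the reflection, so their coboundaries pair up. For $[b,a,b]$, reflection-symmetry handles most terms, but a delicate case analysis on whether $a<b$, $a=b$, or $a=0$ remains, since $\tau_{[b,a,b]}$ has a noticeably different topology when $a=b$ (where the two outer split-vertices merge into the central vertex) than when $a<b$.

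Once the cocycle verification is complete, completeness follows by a dimension count. The proposed basis has Poincar\'e series $\tfrac{1+t^{2}}{(1-t)(1-t^{3})}$, with initial coefficients $1,1,2,3,3,4,5,5,\ldots$ that agree with $H^{*}(B\sigma_{4};\F_{2})$ as classically computed via the Sylow $2$-subgroup $D_{8}$ and a standard transfer argument; the Java program referenced at the end of the previous section provides a direct numerical check in low degrees. To upgrade linear independence at the cochain level to linear independence in cohomology, I would further verify that no non-trivial combination of the listed cocycles lies in the image of $\delta$, either by explicit computation in small degrees or by detecting each family in $H^{*}(BV_{4};\F_{2})=\F_{2}[x_{1},x_{2}]$ via restriction to the Klein four-group $V_{4}\leq\sigma_{4}$.

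The main obstacle will be the cocycle verification for the third family $[b,a,b]$, where the central vertex of the tree varies in valence and the shuffle combinatorics require careful case-by-case tracking; the other two families fall out relatively cleanly from the reflection symmetry combined with the binomial-coefficient cancellations mod $2$.
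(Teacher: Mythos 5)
Your first step (checking the listed elements are cocycles) matches the paper, which treats that as a routine computation with \refP{coboundarymap}. The genuine gap is in your independence step. Restriction to a single Klein four-group cannot do what you ask of it: if $V_{4}\le\si_{4}$ is the transitive (normal) Klein four-group, it lies inside $A_{4}$, so the degree-one class $\x=\Symm([1,0,0])$ and hence the whole first family $\Symm([a,0,0])=\x^{a}$ restricts to zero there; if instead you mean the non-transitive $\langle(12),(34)\rangle=\si_{2}\times\si_{2}$, then the primitive classes $[b,a,b]$ with $a>0$ (e.g.\ $[1,1,1]$) restrict to zero, since restriction to $\si_{2}\times\si_{2}$ is computed by the coproduct component in $\fn_{2}\otimes\fn_{2}$, which vanishes for cochains with no zero entries. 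Mod-two cohomology of $\si_{4}$ is detected only on the pair of maximal elementary abelian subgroups taken together, and even granting that classical fact you would still need to compute the restrictions of the Fox--Neuwirth cocycles to both subgroups (the paper only supplies such restriction information much later, via Theorem~\ref{C:maptodickson}, quoted from \cite{GSS09}). Your other fallbacks --- ``explicit computation in small degrees'' and the Java program --- are finite checks and cannot establish that no nontrivial combination of the listed cocycles is a coboundary in \emph{all} degrees, which is exactly what the basis claim requires once several listed elements share a degree (already in degree $6$ there are five).

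Beyond that gap, be aware that your route also forfeits the stated point of this section: you import the dimension count from the classical $D_{8}$/transfer computation of $H^{*}(B\si_{4};\F_{2})$, whereas the paper's proof is self-contained at the cochain level. It builds a degree $-1$ operator $P$ on the subcomplex $S$ of cochains with all entries positive and not of the shapes $[b,a,b]$, $[a,a,b]$, $[b,a,a]$, shows $P$ is a chain null-homotopy of the block-exchange map $\chi$ (so every cocycle in $S$ is exact), and then disposes of the complementary pieces $S_{f0}$, $S_{l0}$, $S_{m0}$, $S_{00}$, $S_{2a}$ by short direct arguments that force any cocycle to be cohomologous to a combination of the listed ones; spanning and independence come out of the same analysis with no outside input. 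If you wish to keep the Poincar\'e-series strategy, you must replace the detection step by an argument valid in every degree --- for instance restriction to both maximal elementary abelians with the restrictions actually computed on Fox--Neuwirth representatives, or a direct cochain-level argument (like the paper's) that the listed cocycles are not coboundaries modulo one another. As written, the proposal is incomplete.
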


These Fox-Neuwirth representative cocycles along with the general representatives we find below were also found by Vassiliev 
\cite{Vass92}, though he did not use them to determine multiplicative structure as we do and only sketched 
the proof that these cocycles form a basis.   
Because mod-two homology of symmetric groups is well-understood
\cite{Naka61, CLM76}, we based our calculations in \cite{GSS09} on knowledge of homology, as did Vassiliev.  
We take the opportunity to give a complete proof of this result
for $B \si_{4}$ to show that a self-contained treatment of cohomology is feasible.

Throughout this section, we 
label the entries of a basis element of ${\fn_{4}}^{*}$ by $a,b,c$ with $a \leq b \leq c$.


\begin{proof}
That the cochains listed are cocycles is a straightforward calculation using the formula for the differential $\delta$ in the 
Fox-Neuwirth cochain complex given in \refP{coboundarymap}.

Cocycles which are ``disjoint from'' (that is, project to zero in) the subspaces on which our generating cocycles are naturally defined are null-homologous, 
as we will establish through the construction of a chain homotopy operator. 
Consider the submodule $S$
of ${\fn_{4}}^{*}$
spanned by cochains whose entries are all positive, and not of the form $[b,a,b]$, $[a,a,b]$ or $[b,a,a]$ with $a \leq b$.  
We define a chain operator  $P : S \to {\fn_{4}}^{*}$ of degree $-1$ which
lowers the smallest entry by one or maps to zero, depending on the order of the blocks with respect to the minimum entry.  
Explicitly, we have the following, where $0< a< b< c$.

\begin{itemize}
\item  $[a, b, c] \overset{P}{\mapsto} [a-1, b, c],$  and $[a, c, b] \overset{P}{\mapsto} [a-1, c, b],$ and $[a,b,b] \overset{P}{\mapsto} [a-1, b, b]$
\item $[b, c, a]$, $[c,b,a]$ and $[b,b,a]$ all map to zero. 
\item  $[b, a, c] \overset{P}{\mapsto} [b, a-1, c],$
\item  $[c, a, b]$ maps to zero.
\end{itemize}

Let $\chi$ be the automorphism of $S$ which exchanges $a$-blocks, which is a chain map.  
That is $\chi [a,b,c] = [b,c,a]$, while $\chi [b,a,c] = [c,a,b]$, and so forth.
We claim that on $S$ the map $P$ is a chain null-homotopy of $\chi$.  For example if $b > a+1$ and $c > b+1$
then 
$$P \delta [a,b,c] = P\left( [a+1, b, c] + [b,c, a+1] + [a, b+1, c] + [a,c,b+1]\right) = [a,b,c] + 0 + [a-1,b+1,c] + [a-1,c,b+1],$$
while
$$\delta P [a,b,c] = \delta [a-1, b,c] = [a,b,c] + [b,c,a] + [a-1, b+1, c] + [a-1, c, b+1],$$
so their sum is $[b,c,a]$.  Removing the restrictions $b > a+1$ and $c > b+1$ 
and treating other cases all follows from similar direct computation.

Thus for any cocycle $\gamma$ in $S$, 
$$\delta (P \chi (\gamma)) = P (\delta \chi (\gamma)) + \chi (\chi (\gamma)) = \gamma,$$
so any cocycle in $S$ is trivial in cohomology.

We now show that a cycle is always homologous to one whose image in subspaces complementary to $S$ is given by the cycles
we have named, which seems to require ad-hoc analysis.

Consider chains whose first entry is zero, but whose second and third are not, which we call $S_{f0}$.  
When we compose the differential with projection
onto $S_{f0}$, we obtain a complex which we claim is acyclic, as can be seen using a ``decrease the smallest nonzero
entry'' nullhomtopy
as above.  Thus any chain which projects to $S_{f0}$ non-trivially is homologous to one which projects to it trivially.  
Similarly, we can rule
out cycles involving chains whose last entry is zero but first two are not, namely $S_{l0}$.

For  chains whose middle entry alone is zero $S_{m0}$, we have $\delta[a,0,b] = [a,1,b] + [b,1,a]$.  Cochains with a middle entry of one 
do not otherwise appear in the image of $\delta$, so the projection of a cycle onto $S_{m0}$ must itself be a cycle, 
which direct calculation shows must be of the form $[a,0,b] + [b,0,a]$.

Similarly,  if 
we consider chains with only one-nonzero entry and two zeros, which we call $S_{00}$, the boundary involves chains with a single zero and a single one.
Only chains with two zeros can have such boundaries, so the projection of a cycle onto $S_{00}$ must itself be a cycle, which
direct calculation shows must be of the form $[a,0,0] + [0,a,0] + [0,0,a]$.

Finally, consider $S_{2a}$, which is the span of $[a,a,b]$, $[a,b,a]$, and   $ [a,a,b]$.  Assume at first that
 $a + 1< b$, in which case the image of $[a,a,b]$ under $\delta$ is $[a,a+1, b] + [a, b, a+1]$.  The only other cochain for which either
of these appears non-trivially in its coboundary is $[a,b,a]$, whose coboundary consists of those along with $[b, a+1, a]$ and $[a+1, b, a]$.
These latter two terms are the coboundary of $[b,a,a]$, so the only cycle in which these occur nontrivially 
is $[a,a,b] + [a,b,a] + [a,a,b]$.  But this is trivial in cohomology, equal to $\delta ([a, a-1,b])$.  Thus any cycle is homologous
to one which projects trivially onto  $S_{2a}$.  The analysis with $a+1 = b$ is similar.  

Because ${\fn_{4}}^{*} $ is spanned by $S$, $S_{f0}$, $S_{l0}$, $S_{m0}$, $S_{00}$, $S_{2a}$,
and the cycles listed in the statement of the theorem, the result follows.
\end{proof}

\subsection{Block symmetry and skyline diagrams}
The cochain representatives for the cohomology of $B\si_{4}$ all exhibit some symmetry.  Define a block permutation to be a permutation
of the entries of a Fox-Neuwirth basis cochain which does not change its (unordered) set of $i$-blocks for any $i$.   The Fox-Neuwirth cochains which comprise each of the 
cocyles listed in \refT{bs4basis} are invariant under block-permutations.  In particular, if we let $\Symm({\bf a})$ denote the sum of all
distinct block-permutations of ${\bf a}$, then the first two sets of generators are $\Symm([a,0,b])$  for $0 \leq a < b$  and the 
third is $\Symm([b,a,b])$,  for $0 \leq a \leq b$.
Symmetric representatives exist for  the cohomology $B \si_{n}$ more generally, as we will see below.  
It might be enlightening to have a more direct 
proof of this block-symmetry property than through simply observing it holds after full computation.

The symmetry of these cochains allows the essential data defining them to be given in a graphical representation which we call
\emph{skyline diagrams}. The basis elements described in \refT{bs4basis} correspond to skyline diagrams in \refF{bs4skyline}.
These will be treated fully in Section~\ref{S:skyline}.

\begin{figure}
\subfigure[$\Symm({[}a,0,b{]}), 0 \leq a < b$]{
\psfrag{a1}{$b$}
\psfrag{a2}{$a$}
\includegraphics{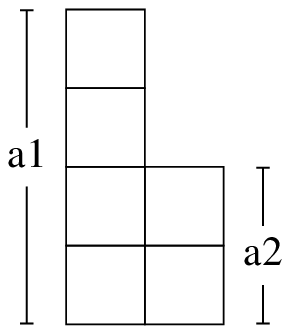}
}
\subfigure[${[}b,a,b{]}$, $0 \leq a \leq b $]{
\psfrag{b1mb2}{$b -a$}
\psfrag{b2}{$a$}
\includegraphics{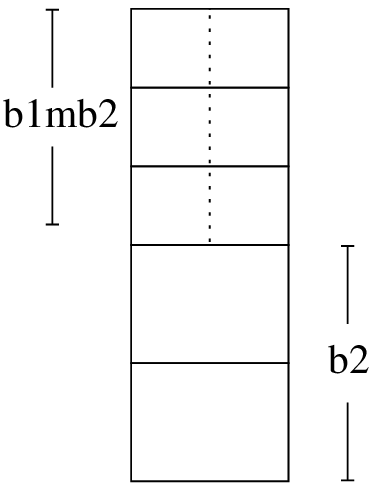}
}
\caption{Skyline diagrams for the basis of $H^*(B\si_4; \F_2)$ described in \refT{bs4basis}}
\label{F:bs4skyline}
\end{figure}

\subsection{Geometry and characteristic classes}
We can understand our cocycles geometrically through intersection theory, which is one of the standard ways to understand
duality.  If $M$ is a manifold without boundary and $M^{+}$ is its one-point compactification, then at the cochain level 
Alexander-Spanier-Whitehead 
duality identifies a cellular codimension-$d$ cell $C$ in $M^{+}$ through a zig-zag of maps with a cochain on $M$ 
whose value on
chains $\sigma : \Delta^{d} \to M$ which are transversal to the interior of $C$  is the mod-two count of $\sigma^{-1}(C)$.  In our setting,
these counts are of configurations whose points share prescribed coordinates.

For example, in $H_{4}(B\si_{4})$ there is the fundamental class of a submanifold $\R P^{2} \times \R P^{2}$ of $\Conf{4}{\infty}$.
This submanifold parameterizes configurations of four points two of which are on the unit sphere say centered at the origin and two of which 
are on a unit sphere chosen to be disjoint from that one.  The cocycle $[2,0,2]$ evaluates non-trivially on this homology class,
as can be seen by a count of configurations in this submanifold and in the cell, as we picture in \refF{pairing}.  

The corresponding 
characteristic class can be evaluated on a four-sheeted covering space by embedding it in a trivial Euclidean bundle and taking the
Thom class of the locus where the fibers of the cover can be partitioned into two groups of two, each of which share their first two coordinates.
For the cocyle $[2,1,2]$, the corresponding characteristic class would be the Thom class of the locus where all four points in the fiber
share their first coordinate, and they can be partitioned into two groups of two which share an additional coordinate.

\begin{figure}
\xymatrix{
\includegraphics[width=4cm]{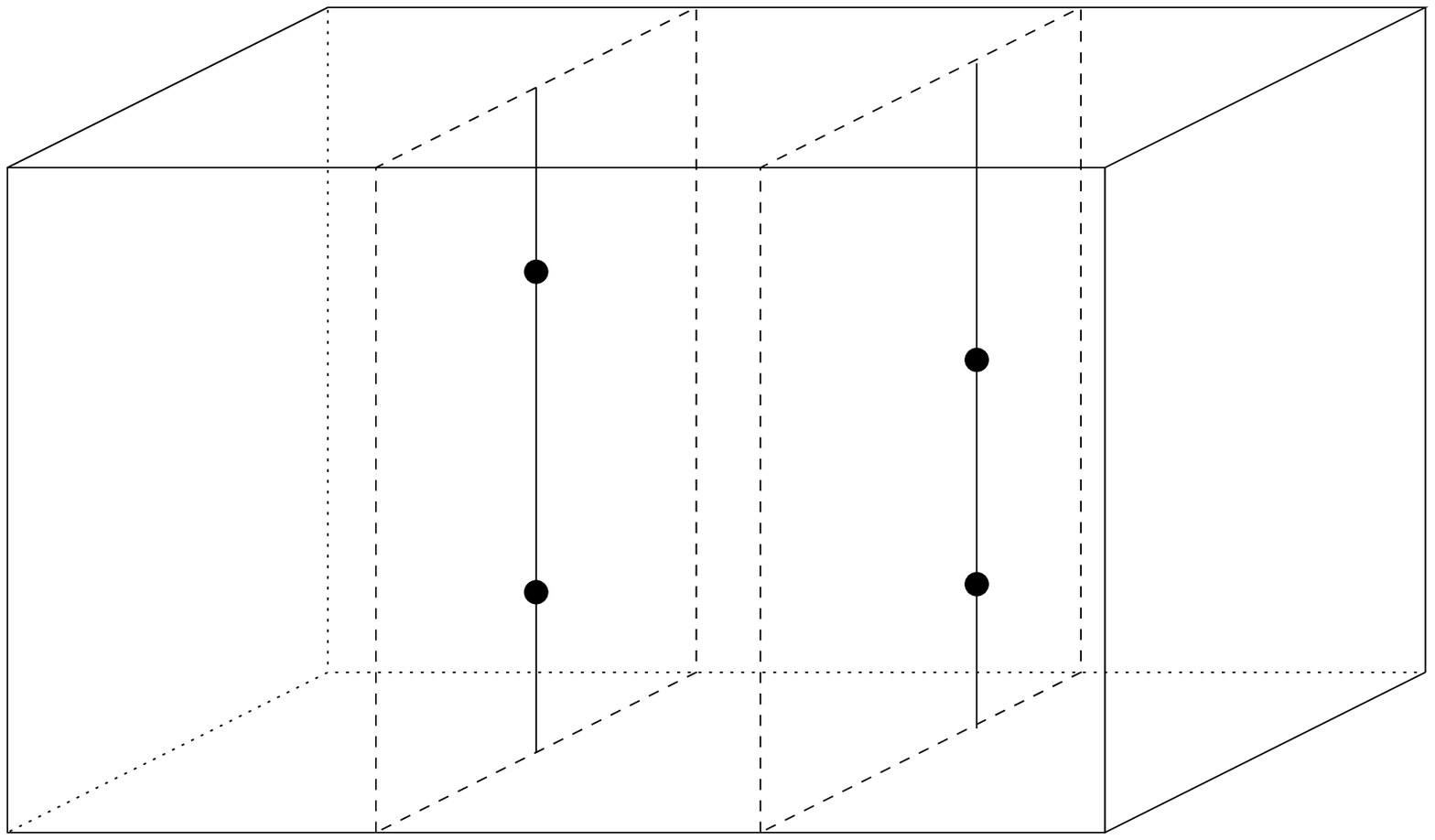}\ar[dr]&&\includegraphics[width=4cm]{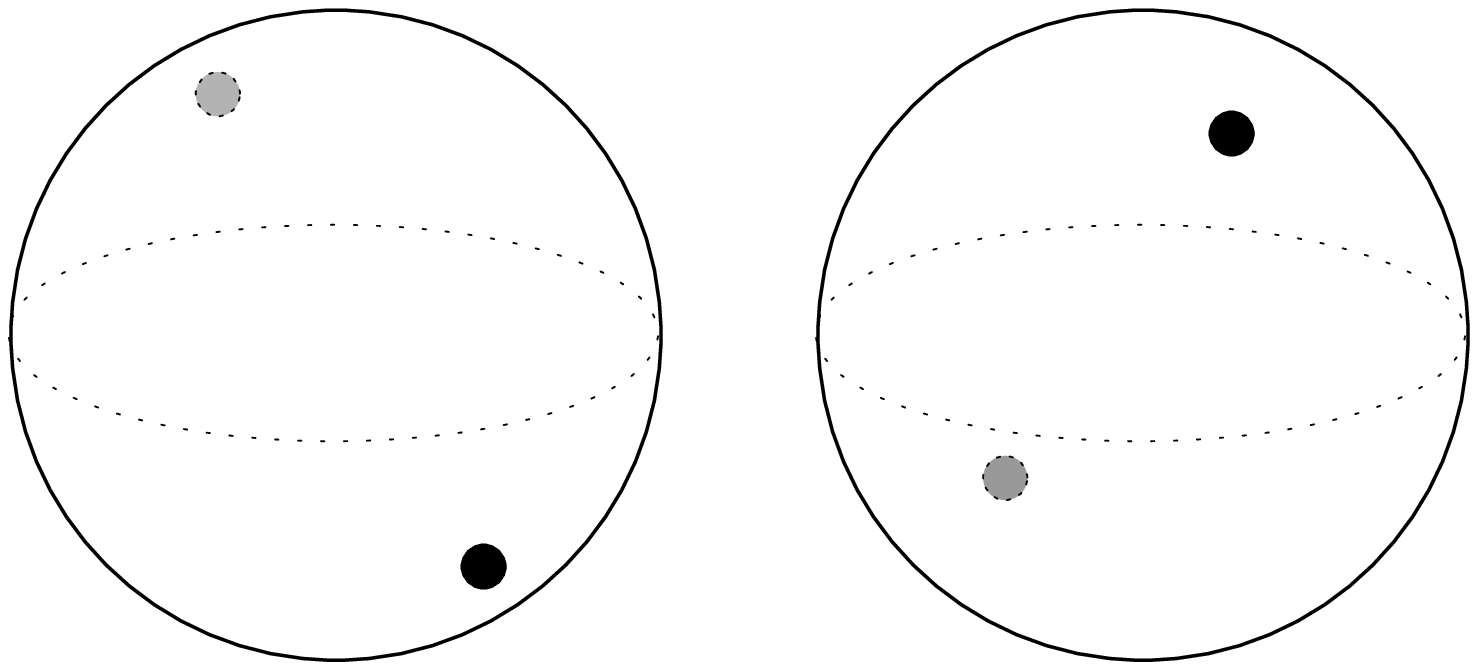}\ar[dl]\\
&\includegraphics[width=4cm]{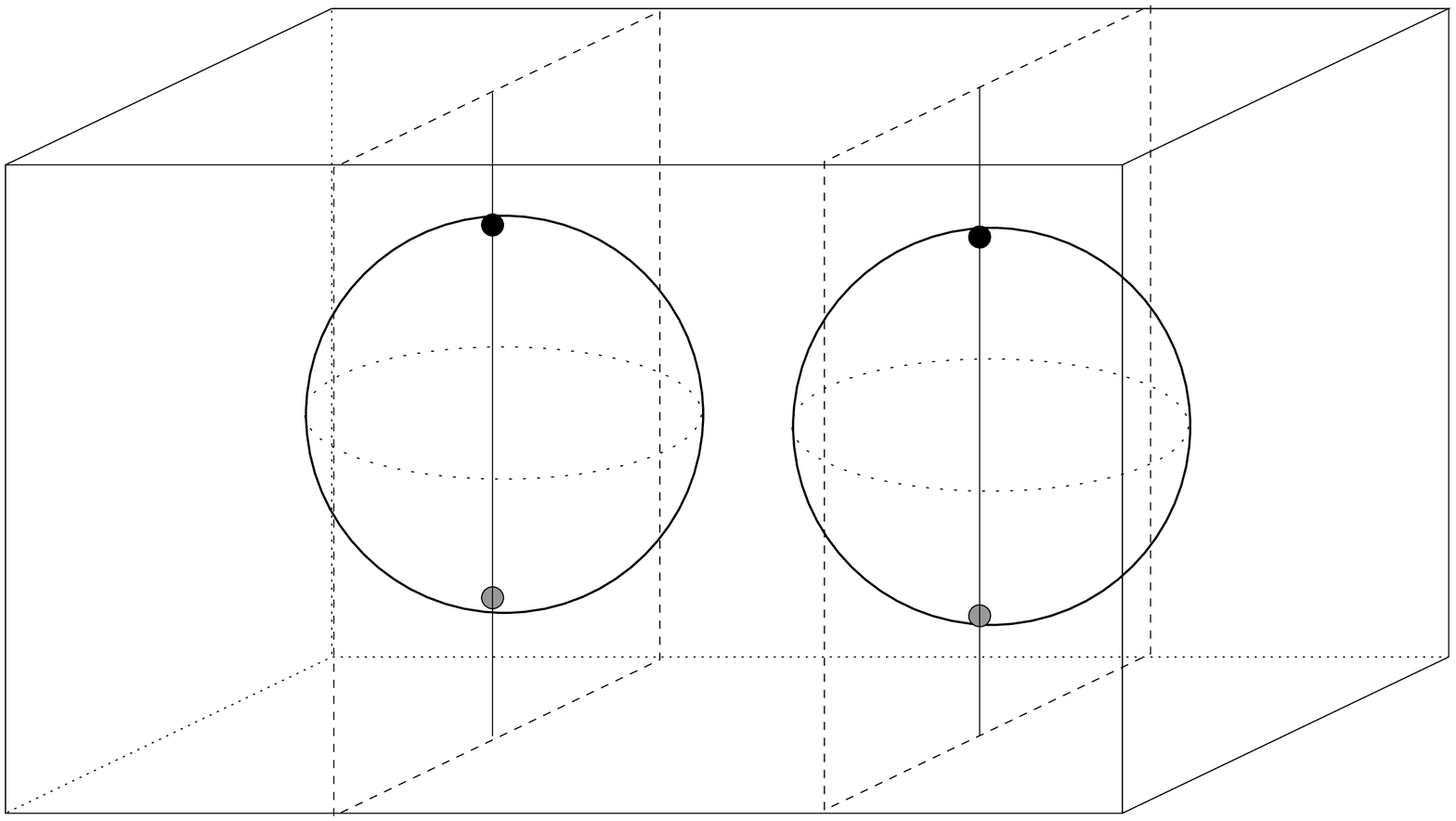}&
}
\caption{The cocycle $[2,0,2] \in H^4(B\si_4)$ pairs non-trivially with the fundamental class of $\R P^2\times \R P^2$ considered as a submanifold of $\Conf{4}{\infty}$ as indicated.}
\label{F:pairing}
\end{figure}

\section{Hopf ring structure and presentation}

While cup product structure typically clarifies the calculation of cohomology, the cohomology rings of symmetric
groups have been notoriously difficult to understand.  Feshbach's description \cite{Fesh02} is complicated, with recursively presented
relations.   We have found that the situation is clarified once one uses 
a second product structure on cohomology, along with a coproduct (and antipode which is trivial)
in order to obtain what is known as a Hopf ring.  This structure was first discovered by Strickland and Turner \cite{StTu97}.

\subsection{Cup product structure}
\begin{defn}\label{D:intprod}
Let $\mathbf{a}=[a_1,\dots,a_{n-1}]$ and $\mathbf{b}=[b_1,\dots,b_{n-1}]$ be basis elements in the mod-two Fox-Neuwirth
cochain complex. 
Define their intersection product $\mathbf{a}\cdot\mathbf{b}$ to be
\begin{equation*}
[a_1,\dots,a_{n-1}] \cdot [b_1,\dots,b_{n-1}] = [a_1+b_1,\dots,a_{n-1}+b_{n-1}],
\end{equation*}
\end{defn}

Integrally, we would also have the usual sign accounting for the possible difference in orientations.  

\begin{lem} 
The intersection product makes $\fn^*$ a (commutative) differential graded algebra.
\end{lem}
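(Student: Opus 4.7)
The plan is to verify the defining properties of a CDGA: commutativity (with unit), associativity, compatibility with grading, and the Leibniz rule. Commutativity and associativity are immediate, since $[\mathbf{a}] \cdot [\mathbf{b}] = [a_1+b_1, \ldots, a_{n-1}+b_{n-1}]$ reduces the product to entrywise integer addition, with $[0, \ldots, 0]$ serving as unit. The grading condition $|\mathbf{a} \cdot \mathbf{b}| = |\mathbf{a}| + |\mathbf{b}|$ is automatic since both sides equal $\sum_k (a_k + b_k)$. The substantive task is the Leibniz rule
\[
\delta(\mathbf{a} \cdot \mathbf{b}) = \delta(\mathbf{a}) \cdot \mathbf{b} + \mathbf{a} \cdot \delta(\mathbf{b}) \pmod{2}.
\]

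I would attack this position-by-position. Since $(\mathbf{a} \cdot \mathbf{b})[i] = \mathbf{a}[i] \cdot \mathbf{b} = \mathbf{a} \cdot \mathbf{b}[i]$, each of $\delta_i(\mathbf{a} \cdot \mathbf{b})$, $\delta_i(\mathbf{a}) \cdot \mathbf{b}$, and $\mathbf{a} \cdot \delta_i(\mathbf{b})$ is a sum of shuffle-translates of a common incremented sequence. The three shuffle sets however sit at different heights in their respective trees: $\Sh(\mathbf{a}, i)$ governs sub-blocks at height $a_i + 1$ in $\tau_{\mathbf{a}[i]}$, $\Sh(\mathbf{b}, i)$ at height $b_i + 1$ in $\tau_{\mathbf{b}[i]}$, while $\Sh(\mathbf{a} \cdot \mathbf{b}, i)$ acts at height $a_i + b_i + 1$ in $\tau_{(\mathbf{a} \cdot \mathbf{b})[i]}$. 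The key combinatorial step is to decompose the shuffles of the product according to how each position $j$ in the $(a_i + b_i)$-block around $i$ contributes: by $a_j > a_i$ alone, by $b_j > b_i$ alone, or by both simultaneously. Shuffles restricting to sub-blocks purely of the first kind should match terms of $\delta_i(\mathbf{a}) \cdot \mathbf{b}$, those purely of the second kind should match $\mathbf{a} \cdot \delta_i(\mathbf{b})$, and ``mixed'' shuffles should pair up under an involution swapping an adjacent pair of mixed sub-blocks and hence cancel mod $2$.

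The hard part will be executing this trichotomy rigorously — in particular producing the involution on mixed shuffles and handling the cases where the sub-block structure of $\mathbf{a} \cdot \mathbf{b}$ merges sub-blocks coming from $\mathbf{a}$ and from $\mathbf{b}$ that were separated in each factor individually. As a conceptual guide and potential alternate route, I would appeal to the geometry underlying the proof of \refT{fnmodel}: the Fox-Neuwirth cochains are Alexander-Spanier-Whitehead dual to strata in $\Conf{n}{\infty}^+$ determined by shared coordinates, and under this duality the intersection product corresponds, after a generic coordinate perturbation placing the two subvarieties in transverse position, to genuine geometric intersection; the Leibniz rule then follows from the standard mod-$2$ boundary-intersection identity $\partial(X \cap Y) = \partial X \cap Y + X \cap \partial Y$.
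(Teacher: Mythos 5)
The routine verifications (commutativity, associativity, unit $[0,\ldots,0]$, additivity of degree) are fine, but the Leibniz rule, which you correctly isolate as the entire content of the lemma, is actually \emph{false} at the cochain level, so the shuffle trichotomy cannot be completed. Work in ${\fn_3}^{*}$ with the differential of \refP{coboundarymap} (which you can calibrate against the paper's worked examples in ${\fn_5}^{*}$). One finds $\delta([1,0]) = 3\,[1,1] = [1,1]$, $\delta([1,1]) = 2\,[2,1]+2\,[1,2]=0$, and $\delta([2,0]) = [2,1]+[1,2]$. Taking $\mathbf{a}=\mathbf{b}=[1,0]$ gives $\delta(\mathbf{a}\cdot\mathbf{b}) = \delta([2,0]) = [2,1]+[1,2]$, while $\delta(\mathbf{a})\cdot\mathbf{b}+\mathbf{a}\cdot\delta(\mathbf{b}) = 2\,[1,1]\cdot[1,0] = 0$. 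The mechanism of failure is exactly at the point you flag as ``the hard part'': for $i=2$ the quotient vertex sits at height $1$ in both $\tau_{[1,1]}$ and $\tau_{[2,1]}$, but it has three interchangeable children (three chains) in the first tree and two distinguishable children (a $\{1,2\}$-subtree and a single chain) in the second, so $\Sh(\mathbf{a},2)$ and $\Sh(\mathbf{a}\cdot\mathbf{b},2)$ have sizes $3$ and $2$ respectively; there is no parity-preserving correspondence between them and no involution available to cancel the surplus terms.

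The geometric fallback does not rescue the chain-level claim either, and for the same underlying reason. The strata Alexander-dual to $\mathbf{a}$ and $\mathbf{b}$ are both cut out by equalities among \emph{initial} blocks of coordinates and are therefore not transverse (the stratum for $[1,0]$ is certainly not transverse to itself). The paper's own discussion following the next proposition achieves transversality by having $\mathbf{b}$ impose equality of the \emph{next} $b_i$ coordinates; after that perturbation the identity $\partial(X\cap Y)=\partial X\cap Y + X\cap \partial Y$ does hold, but $X\cap Y$ is then a generalized Fox--Neuwirth cochain which is merely cohomologous to, not equal to, $[a_1+b_1,\ldots,a_{n-1}+b_{n-1}]$. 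That argument establishes the subsequent proposition (the induced product on cohomology is the cup product) but not a differential graded algebra structure on ${\fn_n}^{*}$ itself. Note that the paper supplies no proof of this lemma; as literally stated it needs to be weakened --- for instance to the assertion that $\cdot$ is an associative, commutative, unital graded product which carries cocycles representing cohomology classes to the correct products in cohomology, or to a statement on a suitable symmetrized subcomplex --- so your write-up should record the counterexample and prove the weaker statement rather than attempt the trichotomy.
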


\begin{prop}
Through the isomorphism of \refT{fnmodel}, the product on cohomology induced by the intersection product  on ${\fn_{n}^{*}}$ agrees
with the  cup product on cohomology of $B\si_{n}$.  
 \end{prop}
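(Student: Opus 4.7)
The plan is to exploit the geometric interpretation of Fox--Neuwirth cochains as Thom classes and then invoke the standard principle that the cup product of Poincar\'e duals coincides with transversal geometric intersection of the underlying cycles.

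First I would make explicit the content of the Alexander--Spanier--Whitehead duality used in the proof of \refT{fnmodel}: under that duality, the cochain $[\mathbf{a}]$ on $\Conf{n}{m}$ is represented by the Thom class of the closure $\overline{\Conf{\mathbf{a}}{m}}$, viewed as a codimension $\sum a_i$ subvariety of the open manifold $\Conf{n}{m}$. In particular, evaluating $[\mathbf{a}]$ on a chain $\sigma$ transversal to $\Conf{\mathbf{a}}{m}$ gives the (mod-two) count of $\sigma^{-1}(\Conf{\mathbf{a}}{m})$. Passing to the quotient by $\si_n$ gives the analogous statement for $\UConf{n}{m}$ and hence, after stabilizing, for $B\si_n$.

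Next I would verify that the intersection product of \refD{intprod} matches transversal geometric intersection. In the open dense stratum of $\Conf{n}{m}$ where the $n$ points have a fixed dictionary order, $\overline{\Conf{\mathbf{a}}{m}}$ is cut out linearly by the equations asserting that the $i$-th and $(i+1)$-st points agree in their first $a_i$ coordinates. After perturbing one of the two subvarieties by a generic small rotation of the coordinate frame, the perturbed $\overline{\Conf{\mathbf{a}}{m}}$ meets $\overline{\Conf{\mathbf{b}}{m}}$ transversally, and the combined linear equations cut out the locus where consecutive points share their first $a_i+b_i$ coordinates, which is precisely $\overline{\Conf{\mathbf{a}+\mathbf{b}}{m}}$. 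Stabilizing in $m$ as in \refT{fnmodel} then yields the agreement of the intersection product with the cup product on $H^*(B\si_n)$.

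The main obstacle will be the transversality step. As written, the two cells are defined using the same coordinate frame and the same dictionary ordering, so they are not generically transverse, and one must either (i) replace one cycle by a homologous geometric cycle in general position and check that the intersection does not acquire spurious contributions from the lower-dimensional strata where the dictionary order degenerates, or (ii) produce a direct cochain-level formula by constructing an Alexander--Whitney-style diagonal approximation adapted to the Fox--Neuwirth cell structure. Approach (i) is geometrically transparent but requires verifying that no extra mass collects on boundary strata under perturbation; approach (ii) is more rigorous at the cochain level but heavier machinery. I would pursue (i), using that the codimension of the boundary strata of $\overline{\Conf{\mathbf{a}}{m}}$ exceeds $\sum a_i$ so that, for generic perturbation of $\overline{\Conf{\mathbf{b}}{m}}$, the transversal intersection meets only the top stratum, where the computation above applies.
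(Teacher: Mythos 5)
Your overall strategy---read $[\mathbf{a}]$ as the Thom class of the closed locus $\overline{\Conf{\mathbf{a}}{m}}$ (descended to $\UConf{n}{m}$) via the duality of \refT{fnmodel}, and use that the cup product of Thom classes is the Thom class of a transversal intersection---is exactly the route the paper takes, and your remark that the boundary strata have codimension larger than $\sum a_i$ is the right reason why only top strata matter. The gap is in the transversality step itself. If you perturb $\overline{\Conf{\mathbf{b}}{m}}$ by a generic rotation of the ambient frame, the perturbed locus is defined by equality of the first $b_i$ \emph{rotated} coordinates of consecutive points, so the combined equations do not cut out ``consecutive points share their first $a_i+b_i$ coordinates,'' and the identification of the intersection with $\overline{\Conf{\mathbf{a}+\mathbf{b}}{m}}$ fails as stated. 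Worse, the dictionary ordering that decides \emph{which pairs} of points are constrained is itself frame-dependent: after rotation the points of a configuration are sorted differently, so the two sets of conditions may apply to different pairs, and the intersection acquires ``cross-term'' components (e.g.\ for $[1,0]\cdot[0,1]$ in ${\fn_3}^{*}$, a component where one pair agrees in a frame-one coordinate and a \emph{different} pair agrees in a frame-two coordinate) that are not part of the locus for $\mathbf{a}+\mathbf{b}$ and whose contribution you would still have to compute away.

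The paper's device sidesteps both problems: it generalizes the Fox--Neuwirth construction to loci where \emph{specified} sets of coordinates (not necessarily the first $a_i$) are required to agree, observes that the resulting cohomology class is independent of which coordinates are specified, and then represents $[\mathbf{b}]$ by conditions on the coordinates \emph{immediately following} those used for $[\mathbf{a}]$. The two loci are then transversal for free (the linear conditions involve disjoint coordinate blocks), the orderings are compatible, and the intersection is literally the generalized locus for $\mathbf{a}+\mathbf{b}$, whose class is $[\mathbf{a}+\mathbf{b}]$ by the independence statement. If you want to keep the rotation picture you would need to control the reordering issue and prove the independence-of-choice statement anyway, at which point the disjoint-coordinate trick is both easier and already does the job.
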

 
 We call this the intersection product in light of the fact mentioned above that  the cohomology classes which arise from 
 the Fox-Neuwirth cell structure on $\UConf{n}{D}^{+}$ are Thom classes of the union of corresponding cells in  $\UConf{n}{D}$.
 These unions of cells correspond to images of manifolds whose fundamental classes in locally finite homology are dual to the Thom classes,
 as in Definition~4.6 of \cite{GSS09},
 and thus are appropriate for elementary intersection theory.
We can generalize the Fox-Neuwirth cell structure to a family of such in which specified sets of coordinates other than 
the first $a_{i}$ coordinates must be equal, with the resulting cohomology classes being independent of which are chosen.
We then use that the cup product of Thom classes of two varieties is the Thom class of their intersection (when transversal).
For example the set of configurations in which the first two points have
their first $a_{1}$ coordinates agree intersected with
the set in which the the first two points have the  next $b_{1}$ coordinates equal will of course be the set in which the
first two points have their first $a_{1} + b_{1}$ equal, and so forth, showing that the intersection product in the Fox-Neuwirth cell structure
corresponds to the product structure of the associated Thom classes.

Using the intersection product, it is straightforward to compute the cup product structure appearing in \refF{cupprod} on $H^*(B\si_4;\F_2)$,
using the skyline basis for convenience. 
The computations use observations that were made in the proof of \refT{bs4basis} of cocycles which are coboundaries.
Using the graphical skyline presentation, multiplication consists simply of ``stacking columns in all possible ways'', 
with a result of zero if a vertical line does not continue for the entire height of the column.  See the end of Section~\ref{S:skyline} for 
the general recipe, and Section~6 of \cite{GSS09} for a detailed account.

\begin{figure}
\begin{eqnarray*}
\psfrag{a1}{$a_1$}\psfrag{a2}{$a_2$}\psfrag{a3}{$a_3$}\psfrag{a4}{$a_4$}\raisebox{-1.4cm}{\includegraphics{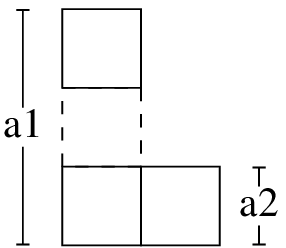}} \ccup \raisebox{-1.4cm}{\includegraphics{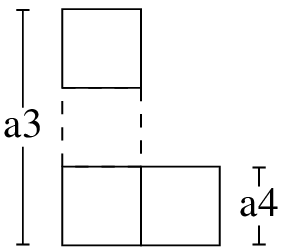}}&=&\psfrag{a2a4}{$a_2+a_4$}\psfrag{a1a3}{$a_1+a_3$}\overline{\delta}_{1+3,2+4}\raisebox{-1.4cm}{\includegraphics{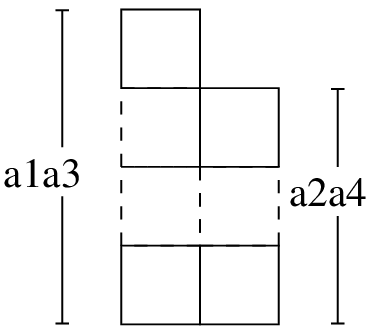}} \\
&&+ \psfrag{a2a3}{$a_2+a_3$}\psfrag{a1a4}{$a_1+a_4$}\overline{\delta}_{1+4,2+3}\raisebox{-1.4cm}{\includegraphics{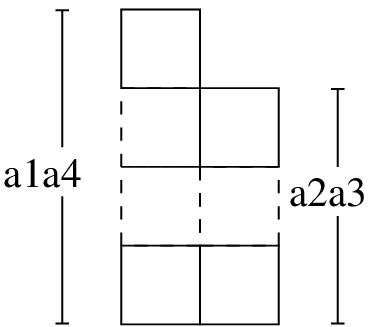}}\\\\\psfrag{a1}{$a_1$}\psfrag{a2}{$a_2$}\raisebox{-1.4cm}{\includegraphics{images/Skyline_BS4_1_a}} \ccup \psfrag{b1mb2}{$b_1$}\psfrag{b2}{$b_2$}\raisebox{-1.4cm}{\includegraphics{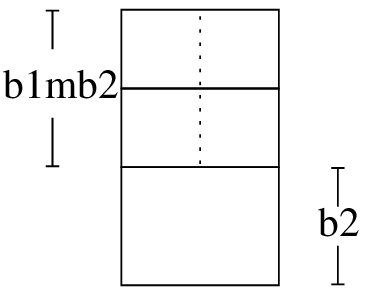}}& =& \begin{cases}
\psfrag{a1}{$a_1$}\psfrag{a2}{$a_2$}\psfrag{b1}{$b_1$}\raisebox{-1.4cm}{\includegraphics{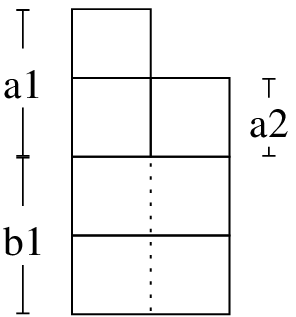}} 
&\text{if } b_2 = 0\\\\
0  &\text{if } b_2 \neq 0
\end{cases}\\\\\psfrag{b1mb2}{$b_1$}\psfrag{b2}{$b_2$}\raisebox{-1.4cm}{\includegraphics{images/Skyline_BS4_6}}  \ccup \psfrag{b3mb4}{$b_3$}\psfrag{b4}{$b_4$}\raisebox{-1.4cm}{\includegraphics{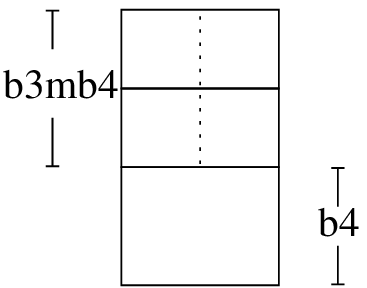}}& =& \psfrag{b1pb3}{$b_1+b_3$}\psfrag{mb2mb4}{}\psfrag{b2pb4}{$b_2+b_4$}\raisebox{-1.4cm}{\includegraphics{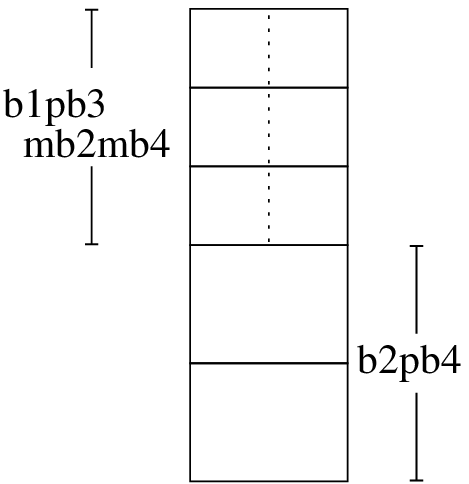}} 
\end{eqnarray*}
\caption{The cup product structure on $H(B\si_4)$. Here,  $\overline{\delta}_{i+j,k+\ell}$ is $1 + \delta(a_{i}+a_{j}, a_{k}+ a_{\ell})\in \F_2$ 
where $\delta$ is the usual Kronecker delta function.}
\label{F:cupprod}
\end{figure}

For a ring presentation, denote cohomology classes using only cochains, and let 
$\x = \Symm[1,0,0]$, $\y = [1, 0, 1]$ and $\z = [1,1,1]$.  Then $H^{*}(BS_{4}; \F_{2}) \cong \F_{2}[\x,\y,\z]/ (\x \z)$.  This presentation
is deceptively simple.  At present there are only recursive presentations of the relations in the cohomology rings of general symmetric groups.

\subsection{Hopf ring structure}
We have found that the best way to organize the relations in the cohomology rings of symmetric groups, which seem inherently recursive, is through a Hopf ring structure

\begin{defn}\label{D:Hopfring}
A Hopf ring is  a ring object in the category of coalgebras.  Explicitly, a Hopf ring is
vector space $V$ with two multiplications, one 
comultiplication, and an antipode $(\tr, \cdot, \Delta, S)$ such that the first multiplication forms a Hopf algebra
with the comultiplication and antipode, the second multiplication forms a bialgebra with the comultiplication,
and these structures satisfy the distributivity relation 
\begin{equation*}
\alpha \cdot (\beta \tr \gamma) =  \sum_{\Delta \alpha = \sum a' \otimes a''} (a' \cdot \beta) \tr 
(a'' \cdot \gamma).
\end{equation*}
\end{defn}

Hopf rings were introduced by Milgram \cite{Milg70}, and arise in topology as one structure governing the homology of 
infinite loop spaces which represent ring spectra.  We give examples arising from algebra in Section~2 of \cite{GSS09} and will give some
explicit calculations below.

\begin{defn}
Let $\mathbf{a}=[a_1,\dots,a_{n-1}]$, and  by convention set $a_{0} = a_{n} = 0$. 
The coproduct  of $\eh \in {\fn_{n}}^{*}$,  is the sum $\Delta \eh = \sum_{a_{i} = 0} [a_{1}, \ldots, a_{i-1}] \otimes [a_{i+1}, \ldots, a_{n}]$
in $\bigoplus_{i+j=n} {\fn_{i}}^{*} \otimes {\fn_{j}}^{*}$.  
\end{defn}

Thus if $a_{i} > 0$ for $1 \leq i \leq n-1$ then $\mathbf{a}$ will be primitive.

\begin{defn}
The transfer product of $\eh \in {\fn_{i}}^{*}$ and $\bb \in {\fn_{j}}^{*}$, denoted $\eh \tr \bb$, is the sum of cochains in ${\fn_{i+j}}^{*}$ whose zero blocks are shuffles of the zero blocks of $\eh$ and $\bb$.
\end{defn}

\begin{thm}
The coproduct and transfer product induce a well-defined coproduct and product respectively on cohomology of $\bigoplus_{n}{\fn_{n}}^{*}.$

Through the isomorphism of \refT{fnmodel}, the induced coproduct and product on cohomology  agree
with the direct sum over $i+j = n$ 
of natural maps and transfers respectively associated to the standard inclusions of symmetric groups
$\si_{i} \times \si_{j} \hookrightarrow \si_{n}$.  
\end{thm}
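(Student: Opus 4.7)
The plan is to prove the theorem in two parts: (a) show that $\Delta$ and $\tr$ commute with the Fox-Neuwirth differential, so they descend to operations on cohomology; (b) identify those operations with the restriction and transfer maps associated to $\si_i \times \si_j \into \si_n$.

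For (a), I would verify the chain-map property directly from \refP{coboundarymap}. The coproduct $\Delta$ splits a depth-ordering at positions $i$ with $a_i = 0$, while $d$ increments some entry $a_j$ and shuffles vertices of the $a_j$-subtree of $\tau_{\mathbf{a}[j]}$. The key combinatorial point is that this subtree lies entirely within the $a_i$-block adjacent to $j$, so the shuffle is confined to one side of any split position $i \neq j$; this produces an exact term-by-term match between $\Delta \, d(\eh)$ and $(d \otimes 1 + 1 \otimes d) \Delta(\eh)$. The case $j = i$ with $a_i = 0$ is consistent on both sides because the increment destroys the zero at $i$, removing that summand from $\Delta \, d(\eh)$, while the corresponding split-first-then-increment path on the other side is already precluded. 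The signs then track by checking that $(-1)^{i+\alpha(i)}$ combines with the Koszul sign $(-1)^{|\eh'|}$ of the tensor differential to reproduce the sign in the unsplit complex. For $\tr$, a parallel analysis applies: terms of $d(\eh \tr \bb)$ that act within a single top-level subtree match $d\eh \tr \bb$ or $\eh \tr d\bb$, while root-level terms that glue top-level subtrees from different factors cancel in pairs, each gluing arising from two compatible shuffle-transition choices with opposite signs.

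For (b), I would use the geometric realization of $B\si_i \times B\si_j \to B\si_n$ through the $\si_i \times \si_j$-equivariant map $\mu \co \Conf{i}{\infty} \times \Conf{j}{\infty} \to \Conf{n}{\infty}$ defined by $\mu(\x, \y) = (\x, \y + T e_1)$ with $T$ larger than any first-coordinate of $\x$. This $\mu$ carries a product Fox-Neuwirth cell $\Conf{\Gamma_1}{m} \times \Conf{\Gamma_2}{m}$ onto the Fox-Neuwirth cell of $\Conf{n}{m}$ obtained by concatenating $\Gamma_1$ and $\Gamma_2$ with a zero inequality at the junction. Dualizing through the isomorphism of \refT{fnmodel}, the restriction $\mu^*$ on cohomology sends a cell cochain $\eh$ to its split form $\sum_{a_i=0} [a_1, \ldots, a_{i-1}] \otimes [a_{i+1}, \ldots, a_{n-1}]$, which is exactly $\Delta(\eh)$. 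For the transfer, use the same geometric picture together with the $\binom{n}{i}$-fold covering $\Conf{n}{\infty}/(\si_i \times \si_j) \to \UConf{n}{\infty}$; the induced transfer on Fox-Neuwirth cochains sends a product cell $\eh \otimes \bb$ to the sum over all interleavings of the top-level subtrees of $\tau_{\eh}$ and $\tau_{\bb}$ as cells in $\UConf{n}{\infty}^+$, which is precisely $\eh \tr \bb$.

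The main obstacle will be the sign bookkeeping in (a), which requires reconciling the explicit signs of \refP{coboundarymap} against the Koszul signs in the tensor differential and against the shuffle signs appearing in $\tr$. A secondary difficulty lies in (b): making the Alexander--Spanier--Whitehead duality of \refT{fnmodel} compatible with both the map $\mu$ and the associated transfer on one-point compactifications, which requires care since $\Conf{n}{\infty}$ is non-compact and the duality only holds in an inverse-limit sense.
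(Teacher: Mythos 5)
The paper itself states this theorem without proof, deferring the topological identification to Strickland--Turner \cite{StTu97} and to \cite{GSS09}, so there is no argument in the text to compare yours against line by line. Your outline is essentially the right one, and part (b) contains the key idea: the standard inclusion $\si_i\times\si_j\into\si_n$ is realized by juxtaposition of configurations, the image of your $\mu$ is the open subset of $\Conf{n}{\infty}$ of configurations splitting into a left group of $i$ and a right group of $j$ points, and the FN cells of that open set are exactly the cells of $\Conf{n}{\infty}$ whose depth-ordering has $a_i=0$ at the junction. One refinement: under the Atiyah/Spanier--Whitehead duality of \refT{fnmodel}, the map you should work with is not $\mu$ on compactifications (juxtaposition does not extend there) but the Pontryagin--Thom collapse $\Conf{n}{m}^{+}\to \mathrm{im}(\mu)^{+}$ associated to the open embedding; that collapse is cellular on the nose for the Fox--Neuwirth structures, sends a cell to the corresponding product cell when it splits and to the basepoint otherwise, and dualizes to give literally the formula for $\Delta$. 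The transfer is then the composite of this collapse description for the intermediate cover $\Conf{n}{\infty}/(\si_i\times\si_j)\to\UConf{n}{\infty}$ with the sum over sheets, and the sheets over a cell $\mathbf{c}$ are indexed exactly by the ways of 2-coloring its top-level subtrees, i.e.\ by interleavings of zero-blocks, recovering $\tr$. Note that once (b) is established at the level of cellular (co)chains, well-definedness on cohomology is automatic, so your part (a) is logically redundant; if you do want the direct chain-level verification, the delicate point is the one you flag for $\tr$ --- the terms of $d(\eh\tr\bb)$ that increment a separating zero between a block of $\eh$ and a block of $\bb$ --- and you should check these occur with even multiplicity mod $2$ rather than asserting a signed cancellation, since the Hopf ring statements here are all with $\F_2$ coefficients (the paper's $\Delta$ and $\tr$ are defined without signs), which also disposes of most of the sign bookkeeping you anticipate.
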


Thus, the coproduct and transfer product  on cohomology agree with the standard coproduct and the transfer product of Strickland and Turner \cite{StTu97}. They show that the cohomology of the disjoint union of the symmetric groups is a Hopf ring with these along with the 
cup product, so 
we have the following corollary, once we define 
the intersection product between cochains in different summands of  $\bigoplus_{n}{\fn_{n}}^{*}$
to be zero.  

\begin{cor}
Through the isomorphism of \refT{fnmodel},  the intersection product,  
transfer product, and coproduct induce a Hopf semiring 
structure on the cohomology of  $\bigoplus_{n}{\fn_{n}}^{*}.$  With mod two coefficients, the identity map
defines an antipode which with these structures yields a Hopf ring.
\end{cor}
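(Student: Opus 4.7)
The plan is to obtain this corollary as a direct consequence of the immediately preceding theorem combined with the Hopf ring result of Strickland and Turner, with only a small additional verification needed for the antipode claim. Since the preceding theorem already identifies each of our induced operations on cohomology with a classically defined operation on $\bigoplus_n H^*(B\si_n;\F_2)$, most of the work has effectively been done.

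First I would use the preceding theorem to identify the transfer product and coproduct on $\bigoplus_n \fn_n^*$ with the standard transfer and restriction maps along the inclusions $\si_i\times\si_j\hookrightarrow\si_{i+j}$. Combined with the earlier proposition identifying the intersection product with the cup product on each $H^*(B\si_n;\F_2)$, these together yield the three operations appearing in the Strickland--Turner Hopf ring on $\bigoplus_n H^*(B\si_n;\F_2)$, once we set intersection between distinct summands to zero as prescribed in the statement.

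Next I would invoke Strickland and Turner \cite{StTu97} directly: they prove that with these three operations, $\bigoplus_n H^*(B\si_n;\F_2)$ is a Hopf ring. The semiring axioms, which amount to bialgebra compatibility between $\Delta$ and each product together with the distributivity axiom of \refD{Hopfring}, make no reference to the antipode, so pulling the structure back along the isomorphism of \refT{fnmodel} yields the claimed Hopf semiring. The same argument applies over general coefficients and gives a Hopf semiring but not a Hopf ring, since there is no reason for an antipode to exist in that generality.

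Finally, for the assertion that the identity is an antipode in mod two, I would verify the antipode axiom
\begin{equation*}
\mu_\tr\circ(S\otimes\mathrm{id})\circ\Delta=\eta\circ\varepsilon
\end{equation*}
with $S=\mathrm{id}$, where $\mu_\tr$ denotes the transfer product. On a primitive class $x$ of positive degree this reduces to $\mu_\tr(x\otimes 1+1\otimes x)=2x$, which vanishes in $\F_2$. The general case follows by induction on degree once one knows the transfer product Hopf algebra is primitively generated. The hard part will be this last step: establishing primitive generation cleanly either requires a forward reference to the Hopf ring generators-and-relations presentation developed in subsequent sections, or an independent argument dualizing Nakaoka's description of $H_*(B\si_n;\F_2)$ as a polynomial algebra on transfers of basic classes.
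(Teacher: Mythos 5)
Your main line is exactly the paper's: the corollary is stated as an immediate consequence of the preceding theorem (which identifies the Fox--Neuwirth coproduct and transfer product with restriction and transfer along $\si_i\times\si_j\hookrightarrow\si_{i+j}$), the proposition identifying the intersection product with cup product, and the theorem of Strickland and Turner that these three operations make $\bigoplus_n H^*(B\si_n;\F_2)$ a Hopf ring. The paper gives no argument beyond this citation, only remarking afterwards that a purely algebraic proof is possible at the level of symmetrized chains. So the Hopf semiring portion of your proposal is correct and is the intended proof.

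The extra antipode verification you append rests on a false premise. The Hopf algebra $\bigl(\bigoplus_n H^*(B\si_n;\F_2),\tr,\Delta\bigr)$ is \emph{not} primitively generated under the transfer product; it is divided-power-like. Concretely, $\Delta 1_2 = 1_2\otimes 1_0 + 1_1\otimes 1_1 + 1_0\otimes 1_2$, so $1_2$ is not primitive, while $1_1\tr 1_1 = 2\cdot 1_2 = 0$, so $1_2$ does not lie in the subalgebra generated by primitives; the same failure occurs for $\gamma_{\ell,2}$, since $\gamma_{\ell,1}\tr\gamma_{\ell,1}=\binom{2}{1}\gamma_{\ell,2}=0$ while $\Delta\gamma_{\ell,2}=\sum_{i+j=2}\gamma_{\ell,i}\otimes\gamma_{\ell,j}$ is not primitive. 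So your induction stalls already at the second component. Note also that being connected, graded, commutative and cocommutative over $\F_2$ does not by itself force $S=\mathrm{id}$ (the Hopf algebra $\F_2[x,y]$ of length-two Witt vectors, with $\Delta y = y\otimes 1 + 1\otimes y + x\otimes x$, has $S(y)=y+x^2$), so some genuine input is required. Your fallback is the route that works: pass to the dual. Under the induction product and the transfer coproduct, $\bigoplus_n H_*(B\si_n;\F_2)$ is polynomial on the classes $Q^I[1]$, and the Cartan formula together with $\Delta[1]=[1]\otimes 1 + 1\otimes[1]$ shows these generators are primitive; hence the antipode there is $(-1)^{\deg}=\mathrm{id}$ mod $2$, and the antipode of the transfer-product Hopf algebra on cohomology is its dual, hence also the identity.
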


An algebraic proof of this corollary is also possible.
At the chain level, $(\cdot, \Delta)$ and $(\tr, \Delta)$ both induce bialgebra structures on $\bigoplus_{n}{\fn_n}^*$, but the Hopf ring distributivity relation fails. However, there is a Hopf semiring structure on the subcomplex of symmetrized chains.  
This is  analogous to rings of symmetric invariants, which are treated in Section~2 of \cite{GSS09}.
For any algebra $A$ which is flat over its ground ring $R$, the total symmetric invariants, $\bigoplus_n (A^{\otimes n})^{\si_n}$, forms a Hopf semiring.   The bialgebra structures hold before taking invariants, but Hopf ring distributivity requires taking invariants.

We can use Hopf ring distributivity of the cup product over the transfer product  to understand the cup product.  Recall
that  $\Symm(\bf{a})$
denote the sum of all cochains related to  $\bf{a}$ by a block-permutation.
To compute $\Symm([3,0,2,2,2]) \cdot [\Symm([4,0,2,0,2])]$, we can decompose $\Symm[4,0,2,0,2]$ as $[4] \tr [2,0,2]$. 
Now, $\Delta(\Symm([3,0,2,2,2])) = [3] \otimes [2,2,2]$ plus terms which must cup to zero with the chains in our decomposition of $\Symm[4,0,2,0,2])$ because they lie in the cohomology of other symmetric groups. Thus we have

\begin{eqnarray*}\label{productcomp}
\Symm([3,0,2,2,2]) \cdot \Symm([4,0,2,0,2]) &=& \left(([3] \cdot [4]) \tr ([2,2,2] \cdot [2,0,2])\right)\\
&=&[7] \tr [4,2,4]\\
&=&[7,0,4,2,4] + [4,2,4,0,7]\\
&=& \Symm([7,0,4,2,4]).
\end{eqnarray*}

\subsection{Hopf ring presentation}
We now develop the most basic classes, whose cup and transfer products will yield all mod-two cohomology of symmetric groups.

\begin{defn}
Let $\mathbf{g}_{\ell,n}$ be the basis element of $(\fn_{n2^{\ell}})^{n(2^{\ell} - 1)}$ with $n$ 0-blocks, each of which consists of $(2^\ell-1)$ consecutive entries of 1.   Let  $\gamma_{\ell, n} = [\mathbf{g}_{\ell, n}]$. 
\end{defn}

For example, $\mathbf{g}_{2,3} = [1,1,1,0,1,1,1,0,1,1,1] \in C^{9}(B\si_{12})$.  The following is an elaboration of the
main result our paper with Salvatore \cite{GSS09}, with Fox-Neuwirth cochain representatives now given.

\begin{thm}\label{T:genandrel}
As a Hopf ring, $H^{*}(\coprod_{n} B \si_{n}; \F_{2})$ is generated by  the 
classes $\gamma_{\ell, n} \in H^{n(2^{\ell} - 1)}(B \si_{n 2^{\ell}})$,
along with unit classes on each component.
The coproduct of $\gamma_{\ell, n}$ is given by
$$\Delta \gamma_{\ell, n} = \sum_{i+j = n} {\gamma_{\ell, i}} \otimes {\gamma_{\ell, j}}.$$
Relations between transfer products of these generators are given by 
$$\gamma_{\ell, n} \tr \gamma_{\ell, m} = \binom{n+m}{n} \gamma_{\ell, n+m}.$$
The antipode is the identity map.
Cup products of generators
on different components are zero, and there are no other relations between cup products of generators.
\end{thm}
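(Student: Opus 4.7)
The plan is to verify the structural claims directly at the Fox-Neuwirth cochain level, then invoke our earlier work with Salvatore \cite{GSS09} for generation and the completeness of relations. First, one checks that $\mathbf{g}_{\ell, n}$ is a cocycle representing a nontrivial class. Since every entry of $\mathbf{g}_{\ell, n}$ is $0$ or $1$, applying $\delta$ from \refP{coboundarymap} produces at each position a mod-two count of shuffles on a forest whose components all have identical shape, and a direct calculation shows this count is even in every case; nontriviality will follow once we identify $\gamma_{\ell, n}$ as a generator in the skyline basis.

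Next, the coproduct formula is immediate from the definition of $\Delta$, which splits $\mathbf{a}$ at each zero entry: the cochain $\mathbf{g}_{\ell, n}$ has exactly $n-1$ zero entries separating $n$ identical all-ones blocks, so splitting after the $i$-th block produces $\mathbf{g}_{\ell, i} \otimes \mathbf{g}_{\ell, n-i}$. The transfer product relation is equally direct: $\mathbf{g}_{\ell, n} \tr \mathbf{g}_{\ell, m}$ is by definition the sum over $(n, m)$-shuffles of the zero blocks, and since every constituent block of ones has the identical length $2^\ell - 1$, every shuffle produces $\mathbf{g}_{\ell, n+m}$, giving multiplicity $\binom{n+m}{n}$. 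The antipode identity over $\F_2$ is automatic, and the vanishing of cup products across components is the stated convention.

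The substantive content is generation and the completeness of relations. For this, one identifies Hopf ring monomials in the $\gamma_{\ell, n}$ with skyline diagrams: each $\gamma_{\ell, n}$ is the diagram consisting of $n$ identical columns of height $\ell$; a transfer product places diagrams side by side; and a cup product, via Hopf ring distributivity, stacks columns whose supporting blocks match (compatibly with the $\x\z = 0$ relation already seen in $B\si_4$). The coproduct formula and the transfer product relation together encode all the ways the same Hopf ring expression can be decomposed. An appeal to the main theorem of \cite{GSS09}, which establishes that skyline diagrams form an additive basis of $H^*(\coprod_n B \si_n; \F_2)$, simultaneously yields that the $\gamma_{\ell, n}$ generate and that no further relations hold.

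The main obstacle is this last step: translating Hopf ring expressions into skyline diagrams in a canonical manner and showing that distinct diagrams correspond to distinct classes. Hopf ring distributivity provides many superficially different presentations of the same monomial, and the transfer product relation creates further identifications. A careful normal form argument, developed in \cite{GSS09} by comparison with the Nakaoka additive structure, is required to see that every class has a unique skyline presentation and hence that the generators-and-relations description above is complete.
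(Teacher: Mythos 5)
Your proposal is correct and follows essentially the same route as the paper, which itself offers no proof of this theorem beyond noting that it restates the main result of \cite{GSS09} with Fox--Neuwirth representatives attached; the cochain-level verifications you carry out (cocycle condition, coproduct from the zero entries, shuffle count $\binom{n+m}{n}$ for the transfer product) are precisely the new content, and generation plus completeness of relations is rightly deferred to \cite{GSS09}. One small slip worth noting: in the paper's conventions $\gamma_{\ell,n}$ is drawn as a \emph{single} rectangle of width $n\cdot 2^{\ell}$ and height $1-2^{-\ell}$ (with $n$ dashed vertical subdivisions), not as $n$ separate columns of height $\ell$, though this does not affect your argument since that identification lives in the part you cite.
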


For example, the cocycle $\Symm([4,3,4,1,4,3,4,0,3,3,3,0,1,0,1,0,1,0,0])$ 
is represented as the Hopf ring monomial 
$\gamma_{3,1}{\gamma_{2,2}}^2\gamma_{1,4}\tr {\gamma_{2,1}}^3\tr\gamma_{1,2}\tr\gamma_{1,1}\tr 1_2$ 
and, as we will see in the next section, the skyline diagram
\begin{center}
\includegraphics[width=4cm]{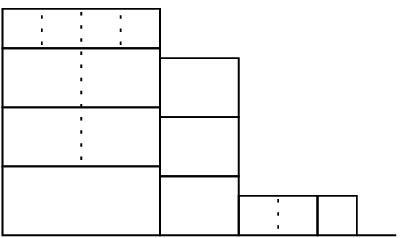}.
\end{center}

An immediate corollary of 
this theorem is that all cohomology classes have block-symmetric representatives, which was  
also found by Vassiliev \cite{Vass92}.

\section{The skyline basis}\label{S:skyline}

A Hopf ring monomial in classes $x_{i}$ is one of the form $f_{1} \tr f_{2} \tr \cdots \tr f_{k}$, where
each $f_{j}$ is a monomial under the $\cdot$ product in the $x_{i}$.
 Because of Hopf ring distributivity,  it is convenient to use Hopf ring monomials in chosen generators to span a Hopf ring.  
In this section, we give a graphical presentation of a Hopf ring monomial basis for the mod-two
 cohomology of symmetric groups, using both spatial dimensions to represent the two products.

\begin{defn}
A gathered monomial in the cohomology of symmetric groups is a Hopf ring monomial  in the generators $\gamma_{\ell, n}$  where such $n$ are maximal or equivalently the number of transfer products which appear is minimal.
\end{defn}

For example, $\gamma_{1,4} {\gamma_{2,2}}^{3} \tr \gamma_{1,2} {\gamma_{2,1}}^{3}
= \gamma_{1,6}  {\gamma_{2,3}}^{3}$.  Gathered monomials such as the latter in which 
no transfer products appear are building blocks for general gathered monomials.

\begin{defn}
A gathered block is a monomial of the form $\prod_{i} {\gamma_{\ell_{i}, n_{i}}}^{d_{i}},$
where the product is the cup product.  Its profile is defined to be the collection
of pairs $(\ell_{i}, d_{i})$.

Non-trivial gathered blocks must have all of the numbers $2^{\ell_{i}} n_{i}$
equal, and we call this number divided by two the width.  
We assume that the factors are ordered from smallest to largest $n_{i}$ (or largest
to smallest $\ell_{i}$), and then note that $n_{i} = 2^{\ell_{1} - \ell_{i}} n_{1}$.
\end{defn}

\begin{prop}
A gathered monomial can be written uniquely as the transfer product of gathered blocks with distinct profiles. 
Gathered monomials form a canonical additive basis for the cohomology of $\coprod_{n} B \si_{n}$.
\end{prop}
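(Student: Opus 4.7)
The plan is to deduce the proposition from the Hopf ring presentation of \refT{genandrel} via a combine-and-reduce algorithm, with the main ingredient being an identity for transfer products of same-profile gathered blocks.

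First I would establish the following key identity: if $B_a$ and $B_b$ are gathered blocks with the same profile $\{(\ell_i, d_i)\}$ and base parameters $a$ and $b$ respectively (so $B_a = \prod_i \gamma_{\ell_i, 2^{\ell_1-\ell_i}a}^{d_i}$ and similarly for $B_b$), then in $\F_2$
\begin{equation*}
B_a \tr B_b = \binom{a+b}{a}\, B_{a+b},
\end{equation*}
where $B_{a+b}$ is the gathered block of the same profile with base parameter $n_1 = a+b$. The base case $B_a = \gamma_{\ell_1, a}$ is precisely the relation from \refT{genandrel}. For the inductive step I would apply Hopf ring distributivity to the product $\gamma_{\ell_1, a+b} \cdot (B_a' \tr B_b')$, where $B_a'$, $B_b'$ have one fewer factor; expanding $\Delta\gamma_{\ell_1, a+b} = \sum_{i+j=a+b} \gamma_{\ell_1, i} \otimes \gamma_{\ell_1, j}$ and noting that a summand contributes nontrivially only when the cup products $\gamma_{\ell_1, i}\cdot B_a'$ and $\gamma_{\ell_1, j}\cdot B_b'$ both stay within their original components (forcing $i=a$, $j=b$ by cross-component vanishing) extracts the induction.

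From this identity the decomposition follows. A gathered monomial is, by definition, $f_1 \tr \cdots \tr f_k$ with $k$ minimal. Each nontrivial $f_i$ must be a gathered block because cup products of generators from distinct components vanish, and if any two $f_i$, $f_j$ shared a profile the identity above would merge them into a single block (or into zero), decreasing $k$ and contradicting minimality. Hence the blocks appearing in any gathered monomial automatically have pairwise distinct profiles. For the basis claim, spanning is immediate by iterating the reduction: starting from an arbitrary Hopf ring monomial in the $\gamma_{\ell, n}$, one discards cross-component cup products and then repeatedly merges same-profile blocks via the identity until the expression is a gathered monomial. For linear independence I would compare the number of gathered monomials in each bigrading $(n, k)$ to the classical computation of $\dim H^k(B\si_n; \F_2)$ recorded by Nakaoka \cite{Naka61}; since spanning is already in hand, matching dimensions forces independence. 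Uniqueness of the distinct-profile decomposition then follows at once from the basis property.

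The central obstacle is the combination identity itself. The induction requires careful bookkeeping of which coproduct summands survive the single-component constraint, and in characteristic two one must check that when the binomial coefficient $\binom{a+b}{a}$ vanishes the resulting zero is genuinely consistent with the claimed basis (rather than witnessing a hidden relation not encoded by \refT{genandrel}). Extending the identity beyond single-column blocks to general profiles via nested inductions on the multiset $\{(\ell_i, d_i)\}$ is where the bulk of the combinatorial work lies.
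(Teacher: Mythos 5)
Your proposal is correct and takes essentially the intended route: the paper states this proposition without proof, deferring to Section~6 of \cite{GSS09}, and the argument there is precisely your combine-and-reduce scheme --- the merging identity $B_a \tr B_b = \binom{a+b}{a}\,B_{a+b}$ for same-profile gathered blocks (proved, as you do, by peeling off one generator and applying Hopf ring distributivity plus cross-component vanishing of cup products, with Lucas' theorem giving $\binom{2^e(a+b)}{2^e a}\equiv\binom{a+b}{a}$ mod $2$ so the coefficient is well defined), followed by a count of gathered monomials against the known additive structure of $H^*(B\si_n;\F_2)$ from \cite{Naka61} to promote spanning to a basis. The only caveat is one of emphasis: the merging identity is the easy half, while the genuine work is the combinatorial bijection between gathered monomials in each bidegree and Nakaoka's basis, which you correctly identify as the independence step but leave unexecuted.
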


Graphically, we represent $\gamma_{\ell,n}$ by a rectangle of width $n \cdot 2^{\ell}$ and height $1 - \frac{1}{2^{\ell}}$, so that its 
area corresponds to its degree. 
We  represent $1_{n}$ by an edge of width $n$ (a height-zero rectangle).
A gathered block, which is a product of $\gamma_{\ell, n}$ for fixed $n \cdot 2^{\ell}$,
is represented by a single column of such rectangles, stacked on top of each other, with order which does not matter.
A gathered monomial is represented by placing such columns next to each other, which we call the skyline diagram of the monomial.  We also refer to the gathered monomial basis as the skyline basis to emphasize
this presentation.

In terms of skyline diagrams, the coproduct can be understood by introducing vertical dashed lines in the
rectangles representing $\gamma_{\ell,n}$, dividing the rectangle into $n$ equal pieces.  The coproduct
is then given by dividing along all existing columns and vertical dashed lines of full height
and then partitioning them into two to make two new skyline diagrams.

The transfer product corresponds to placing two column Skyline diagrams next to each
other and merging columns with the same constituent blocks, with a coefficient of zero if any of those column
widths share a one in their dyadic expansion.  

For cup product, we start with two column diagrams and consider all possible ways to 
split each into columns, along either original boundaries of columns or along the vertical lines of full height
internal to the rectangles representing $\gamma_{\ell,n}$.  We then match columns of each in 
all possible ways up to automorphism, and stack the resulting matched columns to get a new
set of columns, as we saw for $B\si_{4}$.

See Figure~\ref{F:products} for illustrations of cup product, which thus also involve
the coproduct and transfer product.  See Section~6 of \cite{GSS09} for thorough treatments of all of these structures.

\begin{figure}

\begin{eqnarray*}
\raisebox{-.2cm}{\includegraphics[width=2cm]{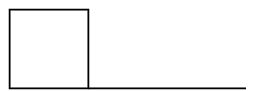}} \cdot \raisebox{-.2cm}{\includegraphics[width=2cm]{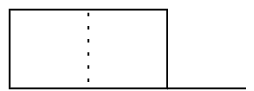} }&=& \raisebox{-.2cm}{\includegraphics[width=2cm]{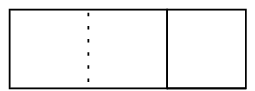} }+ \raisebox{-.2cm}{\includegraphics[width=2cm]{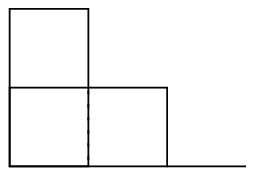}}\\
\gamma_{1,1}\tr 1_4 \cdot \gamma_{1,2}\tr 1_2&=& \gamma_{1,3} + \gamma_{1,1}^2 \tr \gamma_{1,1} \tr 1_2\\
&&\\
\raisebox{-.2cm}{\includegraphics[width=2cm]{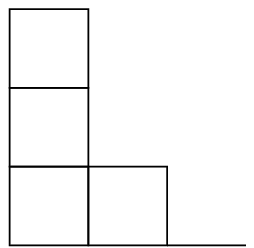}}\cdot\raisebox{-.2cm}{\includegraphics[width=2cm]{images/Mult2}}&= &\raisebox{-.2cm}{\includegraphics[width=2cm]{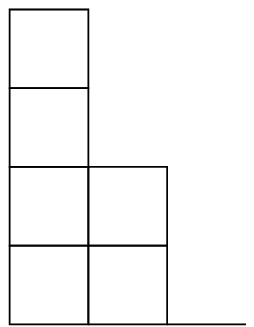}} + \raisebox{-.2cm}{\includegraphics[width=2cm]{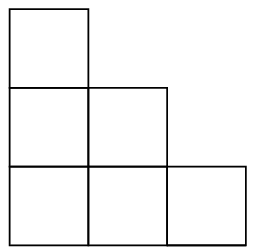}} + \raisebox{-.2cm}{\includegraphics[width=2cm]{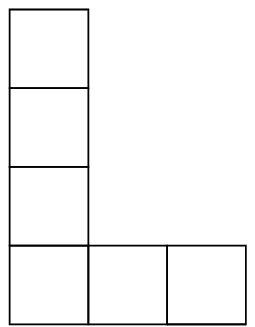}}\\
&=&\raisebox{-.2cm}{\includegraphics[width=2cm]{images/Mult3}} + \raisebox{-.2cm}{\includegraphics[width=2cm]{images/Mult4}}\\
\gamma_{1,1}^3\tr 1_2 \cdot \gamma_{1,2} \tr 1_2 &=& \gamma_{1,1}^4 \tr \gamma_{1,1}^2 \tr 1_2 + \gamma_{1,1}^3 \tr \gamma_{1,1}^2 \tr \gamma_{1,1}\\
&&\\
\raisebox{-.2cm}{\includegraphics[width=2cm]{images/Mult6}} \cdot \raisebox{-.2cm}{\includegraphics[width=2cm]{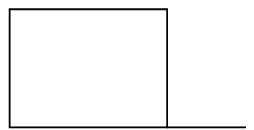}} &=& \raisebox{-.2cm}{\includegraphics[width=2cm]{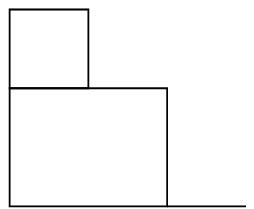}} + \raisebox{-.2cm}{\includegraphics[width=2cm]{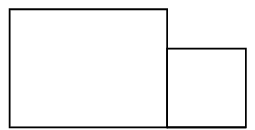}}\\
&=&\raisebox{-.2cm}{\includegraphics[width=2cm]{images/Mult11}}\\
\gamma_{1,1}\tr 1_4 \cdot \gamma_{2,1}\tr 1_2 &=&\gamma_{2,1} \tr \gamma_{1,1}
\end{eqnarray*}

\caption{Examples of product computations in $H^*(B\si_6;\Z/2)$ expressed in skyline  and Hopf monomial notation}
\label{F:products}
\end{figure}

We can use our formula for multiplication to see for example that Vassiliev's conjecture that $d$th powers in the cohomology of 
$\UConf{n}{d}$ are zero is not true.  The cohomology of $\UConf{n}{d}$ is the quotient of that of $\UConf{n}{\infty}$ setting skyline
diagrams with some block height greater than or equal to $d$ to zero.  Then for example in $\UConf{6}{3}$ the cube of 
{\includegraphics[width=2cm]{images/Mult6}} is {\includegraphics[width=2cm]{images/Mult8}}.

\section{The cohomology of $B \si_{\infty}$ as an algebra over the Steenrod algebra}

\subsection{Nakaoka's theorem revisited}
The infinite symmetric group plays a special role in algebraic topology.  Let $\Omega^{\infty} S^{\infty}$ denote the
direct limit, under suspension of maps,  
of the space of based maps from $S^{d}$ to itself.  The Barratt-Priddy-Quillen-Segal theorem \cite{BaPr72} says
that the cohomology of $\Omega^{\infty} S^{\infty}$ is isomorphic to that of $B \si_{\infty}$.

The map $B\si_{n} \to B \si_{n+1}$ induced by inclusion induces the map on cohomology which sends a skyline diagram
with at least one empty column to that obtained by removing that column, and is zero on diagrams with no empty columns.
The inverse limit is thus spanned by skyline diagrams with a finite number of non-empty columns, along with infinitely many
empty columns, which we ignore.  We let the width of such a diagram be the total width of the non-empty columns.
Multiplication is through essentially the same algorithm as in for $B \si_{n}$, which generally increases width unless for example
some diagram is raised to a power of two.

\begin{defn}
A column is even if every block type occurs an even number of times, and odd if at least one block type occurs an odd number of times.
Define the two-root of a skyline diagram $D$ consisting of a single column as the odd column $R$ such that $R^{2^{p}} = D$.
\end{defn}

\begin{thm}[after Nakaoka \cite{Naka61}]\label{T:naka}
The mod-two cohomology of $B\si_{\infty}$ is polynomial, generated by diagrams consisting of a single odd column.
\end{thm}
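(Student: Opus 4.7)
The plan is to establish the polynomial algebra structure of $H^*(B\si_\infty; \F_2)$ via a leading-term analysis on the length filtration of the gathered monomial basis. I would first describe the inverse limit carefully: $H^*(B\si_\infty; \F_2)$ is spanned by gathered monomials with only finitely many non-empty columns, and every column $C$ admits a unique two-root decomposition $C = R^{2^p}$ for an odd column $R$ and integer $p \geq 0$, since squaring a column doubles each $d_i$ in its profile. This immediately reduces the generation question for single-column diagrams to odd columns, so it remains to express multi-column gathered monomials as cup products of single odd columns.

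The central step is a leading-term lemma. Define a filtration on $H^*(B\si_\infty; \F_2)$ by \emph{length} (the number of non-empty columns in a gathered monomial). For any distinct-profile single-column diagrams $C_1, \ldots, C_k$, the cup product at the inverse limit satisfies
\begin{equation*}
C_1 \cdot C_2 \cdots C_k = C_1 \tr C_2 \tr \cdots \tr C_k + (\text{gathered monomials of length less than } k).
\end{equation*}
This follows from the skyline cup-product algorithm: the pure-transfer matching, in which each $C_i$ is paired with empty pieces of the other diagrams rather than stacked onto any $C_j$, contributes the transfer-product leading term with coefficient one; all stacking matchings merge two columns into a single column and thereby reduce length; and internal splittings of $\gamma_{\ell,n}$ rectangles along their full-height vertical lines contribute terms that collapse to lower length via cancellations mod two arising from the transfer relation $\gamma_{\ell, n} \tr \gamma_{\ell, m} = \binom{n+m}{n} \gamma_{\ell, n+m}$ and the coproducts of the atomic generators.

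With the leading-term lemma in hand, surjectivity of the polynomial map follows by induction on length: any gathered monomial $G = B_1 \tr \cdots \tr B_k$ with $B_i = R_i^{2^{p_i}}$ in terms of odd two-roots $R_i$ arises as the leading length-$k$ term of the cup product $R_1^{2^{p_1}} \cdots R_k^{2^{p_k}}$, and the lower-length correction terms are handled by the inductive hypothesis. Injectivity follows because distinct polynomial monomials in the chosen single-odd-column generators yield distinct leading gathered monomials, precluding any nontrivial polynomial relation. A dimension count matching the classical Nakaoka Poincaré series (via admissible Dyer-Lashof sequences) then confirms the polynomial structure and selects the precise generating subset.

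The main obstacle is the careful verification of the leading-term lemma in the presence of internal splittings of the $\gamma_{\ell, n}$ rectangles, which could a priori produce gathered monomials of \emph{greater} length than $k$ rather than less. Showing that such higher-length contributions always cancel mod two requires a careful combinatorial analysis of shuffle products, the coproduct formula $\Delta \gamma_{\ell, n} = \sum_{i+j=n} \gamma_{\ell, i} \otimes \gamma_{\ell, j}$, and the even-binomial-coefficient vanishings of the transfer relation; it is this cancellation argument, more than the identification of generators or the inductive scaffolding, that constitutes the principal technical challenge.
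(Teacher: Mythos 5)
Your overall strategy --- reduce to odd columns via two-roots, then run a leading-term argument in the gathered monomial basis --- is the right one, but your choice of filtration by \emph{length} (number of non-empty columns) breaks the key lemma, and the cancellation you defer to the end does not occur. The paper's own product computations already give a counterexample: in $H^*(B\si_6;\F_2)$ one has $(\gamma_{1,1}\tr 1_4)\cdot(\gamma_{1,2}\tr 1_2) = \gamma_{1,3} + \gamma_{1,1}^2\tr\gamma_{1,1}\tr 1_2$, where the second term survives with coefficient one. It arises by splitting the $\gamma_{1,2}$ rectangle along its internal full-height line and stacking $\gamma_{1,1}$ onto one of the halves: the split raises the column count by one before the stack lowers it by one, so the net length is \emph{not} reduced. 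The same happens with distinct-profile factors, e.g.\ the product of $\gamma_{1,1}^2$ (profile $(1,2)$) and $\gamma_{1,2}$ (profile $(1,1)$) in $H^*(B\si_\infty;\F_2)$ contains the length-two term $\gamma_{1,1}^3\tr\gamma_{1,1}$ alongside the transfer term $\gamma_{1,1}^2\tr\gamma_{1,2}$. These extra terms neither cancel mod two nor collapse to lower length, so the asserted identity $C_1\cdots C_k = C_1\tr\cdots\tr C_k + (\text{length}<k)$ is false, and with it both your surjectivity induction and your injectivity-via-distinct-leading-terms argument. (The concluding appeal to Nakaoka's Poincar\'e series is also circular, since that series is exactly what the theorem computes.)

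The repair is to filter by \emph{width} --- the total width of the non-empty columns --- which is what the paper does. Splitting a rectangle preserves total width while stacking two columns strictly decreases it, so for a gathered monomial $D$ with columns $C_i = R_i^{2^{p_i}}$ one gets $\prod_i R_i^{2^{p_i}} = D$ plus terms of strictly smaller width, with no cancellation argument needed; in the example above $\gamma_{1,1}^2\tr\gamma_{1,1}\tr 1_2$ has width $4$ while $\gamma_{1,3}$ has width $6$. The associated graded of the width filtration is then visibly polynomial on single odd columns, hence so is the filtered ring. Your two-root reduction and inductive scaffolding carry over verbatim once the filtration is changed; it is only the choice of filtration, not the architecture of the argument, that needs fixing.
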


This theorem along with \refT{genandrel} extends Nakaoka's theorem by giving cochain representatives in ${\fn_{\infty}}^{*}$ for generators.
This result should also be compared with the explicit calculation of homology primitives by Wellington \cite{Well82}.

\begin{proof}
We filter the cohomology of $B \si_{\infty}$, as represented by skyline diagrams, by width.  If $D$ is a diagram with columns
$C_{1}, \ldots, C_{n}$, let $R_{1}, \ldots, R_{n}$ be their two-roots with ${R_{i}}^{2^{p_{i}}} = C_{i}$.  By abuse let $R_{i}$ denote
the diagram which consists of $R_{i}$ as its only non-empty column.  Using the algorithm to multiply skyline diagrams by stacking columns,
$D = \prod_{i} {R_{i}}^{2^{p_{i}}}$ modulo diagrams of lower filtration.  Thus the associated graded to the width filtration is a polynomial
algebra on diagrams consisting of a single odd column. So the cohomology of $B \si_{\infty}$ itself must be polynomial.
\end{proof}

In the course of proof, we see that the change of basis between the skyline basis and the monomial basis arising from the
theorem is non-trivial but straightforward.

\subsection{Steenrod structure}
Next we focus on the Steenrod algebra structure on the cohomology of $B \si_{\infty}$ or equivalently
$\Omega^{\infty} S^{\infty}$.  To do so it is best to use the connection between the cohomology of groups and invariant theory
which has been a fundamental tool in the subject.  Recall for example from Chapters~3~and~4
of \cite{AdMi94} that if $H$ is a subgroup of
$G$ then the Weyl group of $H$ in $G$ acts on the cohomology of $H$.  Moreover, the restriction map from the cohomology
of $G$ to that of $H$ has image in the invariants under this action, namely $(H^{*}(BH))^{W(H)}$.

In the study of the cohomology of symmetric groups, the invariant theory which arises is classical.
Let the subgroup $H$ in question be the subgroup
$(\Z/2)^{n} \subset \si_{2^{n}}$ defined by having $(\Z/2)^{n}$ act on itself, which we call $V_{n}$.
The cohomology of $V_{n}$ is that of $\prod_{n} \R P^{\infty}$, namely
$\F_{2}[x_{1}, \ldots, x_{n}].$
If we view the action of $V_{n}$ on itself
as given by linear translations on the $\F_{2}$-vector space
$\oplus_{n}{\F_{2}}$, then we can see that the normalizer of this subgroup
is isomorphic to all affine transformations of $(\F_{2})^{n}$.
The Weyl group is thus $GL_{n}(\F_{2})$.   Moreover, the action is by linear action on the variables
$x_{1}, \ldots, x_{n}$.

The invariants  $\F_{2}[x_{1}, \ldots, x_{n}]^{GL_{n}(\F_{2})}$, studied a century ago, are known as Dickson algebras.  
Because permutation matrices
are in $GL_{n}(\F_{2})$ the invariants are in particular symmetric polynomials.  But for example there is never a $GL_{n}(\F_{2})$
invariant in degree one, since the lone symmetric invariant $x_{1} + \cdots + x_{n}$ is not invariant under the linear substitution
$x_{1} \mapsto x_{1} + x_{2}$ (and $x_{i} \mapsto x_{i}$ for $i>1$).
Dickson's theorem is that as rings these invariants are
polynomial algebras on generators $d_{k,\ell}$ in dimensions $2^{k}(2^{\ell} - 1)$ where $k + \ell = n$.  For example,
$\F_{2}[x_{1},  x_{2}]^{GL_{2}(\F_{2})}$ is generated by an invariant in degree two, namely ${x_{1}}^{2} +
{x_{2}}^{2}  + x_{1} x_{2}$, along with ${x_{1}}^{2}x_{2} + x_{1}{x_{2}}^{2}$ in degree three.

This connection to invariant theory allows us to determine the action of the Steenrod algebra.  The standard
starting point is that of the cohomology of $\R P^{\infty}$, which allows us to understand the Steenrod structure
on that of $B V_{n} \simeq \prod_{n} \R P^{\infty}$ by the Cartan formula.  Because the Steenrod action is defined by squaring
individual variables, which is a linear operation over $\F_{2}$, the $GL_{n}$-invariants are preserved by the Steenrod action.
For example
$$\sq^{1}({x_{1}}^{2} +
{x_{2}}^{2}  + x_{1} x_{2}) = 0 \cdot {x_{1}}^{3} + 0 \cdot {x_{2}}^{2} + {x_{1}}^{2}x_{2} + x_{1} {x_{2}}^{2}.$$
 In \cite{Hung91} Hu'ng calculated the Steenrod squares on Dickson classes as given by
\begin{equation} \label{E:hung}
\sq^i d_{k, \ell}=
\begin{cases}
d_{k', \ell'}  \;\; & i = 2^{k} - 2^{k'}\\
d_{k', \ell'} d_{k'', \ell''}  \;\;   & i = 2^{n} + 2^{k} - 2^{k'} - 2^{k''}, \;\; k' \leq k < k'' \\
{d_{k, \ell}}^{2} \;\; &i = 2^{k}(2^{\ell} - 1) \\
0 \;\;\;\;\;\;\;\; & {\text otherwise}.
\end{cases}
\end{equation}

Turning our attention back to symmetric groups, the transfer product in cohomology is induced by a
stable map.  Thus there is a Cartan formula for transfer product as well, and it suffices
to understand Steenrod structure on Hopf ring generators.
In \cite{GSS09} we prove the following.

\begin{thm}\label{C:maptodickson}
The restriction of $\gamma_{\ell,2^{k}}$ with $k + \ell = n$
to the elementary abelian subgroup $V_{n}$ is the Dickson invariant $d_{k, \ell}$.
\end{thm}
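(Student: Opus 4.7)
The plan is to compute the restriction $\iota^{*}(\gamma_{\ell, 2^{k}})$ geometrically, using the Thom-class interpretation of the Fox-Neuwirth generators developed in Sections~\ref{S:cellstructure} and beyond together with an explicit model of the classifying map for $\iota\co V_{n}\into \si_{2^{n}}$ through the regular representation, and then matching the result against a Dickson generator via invariant theory.

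First, I will fix a $V_{n}$-equivariant linear embedding $V_{n} = \F_{2}^{n} \into \R^{n} \subset \R^{\infty}$ that realizes the regular-representation classifying map $BV_{n} \to \UConf{2^{n}}{\infty}$.  In this model the fiber over a point of $BV_{n}$ is a translated copy of $V_{n}$, and the first coordinate of the fiber-point indexed by $v\in V_{n}$ is an explicit affine-linear function of $x_{1},\dots,x_{n}$.  From the Fox-Neuwirth / intersection-product description of the preceding section, $\gamma_{\ell, 2^{k}}$ is Poincar\'{e} dual to the closure of the stratum of $2^{n}$-point configurations which partition into $2^{k}$ blocks of $2^{\ell}$ each sharing their first coordinate.

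Next, I will pull back this stratum along the explicit classifying map.  Two fiber-points indexed by $v,w\in V_{n}$ share a first coordinate exactly when $v-w$ satisfies a specific linear condition, which gives a linear form $L_{v-w}\in H^{1}(BV_{n})$.  A block of $2^{\ell}$ fiber-points shares its first coordinate if and only if the block is a coset of some $\ell$-dimensional subspace $W\le V_{n}$.  Running over all valid coset partitions --- equivalently, over the $GL_{n}(\F_{2})$-orbit of such subspaces --- yields a $GL_{n}(\F_{2})$-invariant polynomial of degree $2^{k}(2^{\ell}-1)=2^{n}-2^{k}$, expressed explicitly as a sum of products of linear forms in $x_{1},\ldots,x_{n}$.

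Finally I will identify this invariant with $d_{k,\ell}$.  For the base case $k=0$, direct computation gives $\iota^{*}(\gamma_{\ell, 1}) = \prod_{0\ne v\in V_{\ell}} v$, the top Dickson class $d_{0,\ell}$.  For general $k$, Dickson's theorem says the restriction lies in the polynomial ring generated by the $d_{j,n-j}$ in degrees $2^{n}-2^{j}$, so in degree $2^{n}-2^{k}$ it equals $d_{k,\ell}$ plus decomposables built from lower-degree Dicksons.  To pin the coefficient of $d_{k,\ell}$ to $1$ and kill the decomposables, I will use the compatibility of restriction with the Hopf-ring coproduct $\Delta\gamma_{\ell, 2^{k}} = \sum_{i+j=2^{k}}\gamma_{\ell,i}\otimes\gamma_{\ell,j}$, applied through the wreath-product inclusions $\si_{2^{n-1}}\wr\si_{2}\into\si_{2^{n}}$ through which $V_{n}\cong V_{n-1}\times V_{1}$ factors, and induct on $n$.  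The main obstacle is this final identification: matching the explicit coset-sum polynomial coming from the geometry against the polynomial characterization of $d_{k,\ell}$ is delicate because the Dickson generators are defined by a characteristic-polynomial identity rather than as orbit sums, so correctly accounting for the decomposable Dickson terms that might arise from the geometric formula is where the technical difficulty concentrates.
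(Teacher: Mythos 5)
The paper does not actually print a proof of this theorem---it quotes it from \cite{GSS09}---so I am judging your proposal on its own terms. It has sound material at both ends but a genuine gap in the middle, and it misses a degree count that collapses most of the difficulty you anticipate. The simplification: as the paper sets up immediately before the theorem, restriction from $\si_{2^n}$ to $V_n$ lands in the Weyl-group invariants $\F_2[x_1,\dots,x_n]^{GL_n(\F_2)}$, which by Dickson's theorem is polynomial on generators of degrees $2^n-2^j\ge 2^{n-1}$. Every decomposable therefore has degree at least $2^n$, while $\deg\gamma_{\ell,2^k}=2^k(2^\ell-1)=2^n-2^k<2^n$, and $2^n-2^k$ determines $k$. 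So the Dickson algebra is one-dimensional in this degree, spanned by $d_{k,\ell}$; the restriction is either $0$ or $d_{k,\ell}$, your worry about ``decomposable Dickson terms'' is vacuous, and the entire theorem reduces to nonvanishing of the restriction. For that, the coproduct induction you sketch at the end suffices on its own: restricting along $V_{n-1}\subset\si_{2^{n-1}}\times\si_{2^{n-1}}\subset\si_{2^n}$ (the diagonal copy preserving the two cosets of $V_{n-1}$ in $V_n$) picks out the middle term $\gamma_{\ell,2^{k-1}}\otimes\gamma_{\ell,2^{k-1}}$ of $\Delta\gamma_{\ell,2^k}$ and hence gives $\bigl(\iota^*\gamma_{\ell,2^{k-1}}\bigr)^2$, which matches the standard identity $d_{k,\ell}|_{x_n=0}=d_{k-1,\ell}^2$; the base case $k=0$ is your Euler-class computation, $\iota^*\gamma_{n,1}=\prod_{0\ne\lambda\in H^1(BV_n)}\lambda=d_{0,n}$ for the reduced regular representation, which is correct and is the one place the geometry is genuinely needed.

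The gap is the central claim that the pullback of the Fox--Neuwirth stratum is an explicit sum over coset partitions. The embedding that makes ``blocks are cosets'' literally true is the regular-representation model in which the first coordinate of the fiber point indexed by $v$ is $\chi(v)\,t_\chi$ for a single character $\chi$; but then the level sets of the first coordinate are always the two cosets of the hyperplane $\ker\chi$ (or all of $V_n$ where $t_\chi=0$), so the image of the classifying map lies inside the closed stratum of two blocks of size $2^{n-1}$ and is never transverse to the codimension-$(2^n-2^k)$ stratum for $0<k<n$; cosets of $\ell$-dimensional subspaces with $\ell<n-1$ never arise as level sets of one coordinate at all. If instead you perturb to a generic embedding so that transversality holds, the level sets are no longer cosets, and the advertised $GL_n(\F_2)$-orbit sum never materializes. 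Finally, even granting such an orbit-sum expression, its identification with $d_{k,\ell}$ is asserted rather than proved (Dickson generators are not orbit sums of monomials), which you yourself flag as the sticking point. The repair is not to fix this computation but to delete it: Weyl invariance, the degree count above, and your own coproduct induction already close the argument.
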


This theorem,  Hu'ng's calculation above and the fact that the direct some of the restriction map to $V_{n}$ and the coproduct $\Delta$
is injective allow us to understand the Steenrod on the $\gamma_{\ell, 2^{k}}$.

\begin{thm}\label{T:steenact}
A Steenrod
square on $\gamma_{\ell,2^{k}}$ is represented in the skyline basis by
the sum of all diagrams which are of full width, with at most two boxes stacked
on top of each other, and with the width of columns delineated by any of the vertical lines (of full height) at least $\ell$.
\end{thm}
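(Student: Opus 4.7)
The plan is to invoke the detection statement noted in the paragraph preceding the theorem: the direct sum of (i) the restriction map to the elementary abelian subgroup $V_n \subset \si_{2^n}$ (with $n = k+\ell$) and (ii) the coproduct $\Delta$ is injective. To identify $\sq^i \gamma_{\ell, 2^k}$ as the sum of skyline diagrams claimed, it thus suffices to verify that both sides have the same restriction to $H^*(BV_n)$ and the same coproduct.

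For the restriction, Theorem~\ref{C:maptodickson} gives $\text{res}(\gamma_{\ell,2^k}) = d_{k,\ell}$, and naturality of Steenrod operations then reduces the left-hand side to $\sq^i(d_{k,\ell})$, which is Hu'ng's formula \refE{hung}. I would compute the restriction of each proposed skyline diagram (a cup product of $\gamma_{\ell',n'}$'s along each column, transfer-multiplied across columns) and show that the sum matches Hu'ng's expression case by case. Diagrams with a single full-width column restrict to a single Dickson class $d_{k',\ell'}$ or a square ${d_{k,\ell}}^2$, while two-column diagrams yield the product $d_{k',\ell'}d_{k'',\ell''}$. The restriction $k' \leq k < k''$ in Hu'ng's second case forces $\ell' \geq \ell$ on both columns, which corresponds to the lower bound $\ell$ on the widths of full-height delineating lines in the skyline description, and the ``at most two boxes stacked'' condition reflects the absence of cubic or higher terms in Hu'ng's formula.

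For the coproduct, I would induct on $k$. The base case $k=0$ is immediate since $\gamma_{\ell, 1}$ is primitive. For the inductive step, the Cartan formula combined with $\Delta \gamma_{\ell, 2^k} = \sum_{i+j=2^k} \gamma_{\ell,i} \otimes \gamma_{\ell,j}$ yields
\begin{equation*}
\Delta(\sq^r \gamma_{\ell, 2^k}) = \sum_{a+b=r,\ i+j=2^k} \sq^a \gamma_{\ell,i} \otimes \sq^b \gamma_{\ell,j}.
\end{equation*}
Each $\gamma_{\ell,i}$ with $0 < i < 2^k$ decomposes via the relation $\gamma_{\ell,m}\tr\gamma_{\ell,m'} = \binom{m+m'}{m}\gamma_{\ell,m+m'}$ of \refT{genandrel} and Lucas's theorem as a transfer product of the $\gamma_{\ell,2^{k'}}$ with $k' < k$, so by induction and the Cartan formula for transfer product (noted in the text) each $\sq^a \gamma_{\ell,i}$ is known in the skyline basis. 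On the skyline side, the coproduct is computed by partitioning columns along full-height vertical lines, and one must verify that this partition matches the inductive expression.

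The main obstacle will be the combinatorial bookkeeping in the restriction check: showing that Hu'ng's four cases, especially the quadratic one with the constraint $k' \leq k < k''$, reproduce precisely the set of skyline diagrams described, with the exact lower bound $\ell$ on column widths and no missing or extra terms. Once the correspondence is established, the coproduct verification reduces to tracking how each partition of a one- or two-column diagram fits into the Cartan sum.
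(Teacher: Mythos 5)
Your proposal is correct and follows essentially the same route the paper takes: the paper's (sketched) argument in the paragraph preceding the theorem is exactly detection by the injectivity of restriction to $V_n$ plus the coproduct, with Theorem~\ref{C:maptodickson} and Hu'ng's formula handling the restriction and the Cartan formulas (for both $\Delta$ and $\tr$) handling the coproduct, with full details deferred to \cite{GSS09}.
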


See Figure~\ref{F:steen} for some examples using the Cartan formula and this result.

\begin{figure}
\begin{eqnarray*}
\sq^1\raisebox{-.2cm}{\includegraphics[width=2cm]{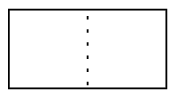}} &=&\raisebox{-.2cm}{\includegraphics[width=2cm]{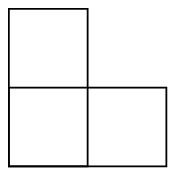}} + \raisebox{-.2cm}{\includegraphics[width=2cm]{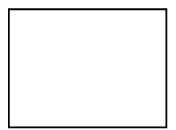}}\\
\sq^1\sq^2\raisebox{-.4cm}{\includegraphics[width=2cm]{images/Steenrod_21}} &=& \sq^1\raisebox{-.4cm}{\includegraphics[width=2cm]{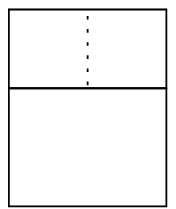}}
=\raisebox{-.4cm}{\includegraphics[width=2cm]{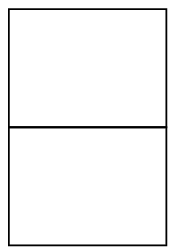}} = \sq^3 \raisebox{-.4cm}{\includegraphics[width=2cm]{images/Steenrod_21}}
\end{eqnarray*}
\caption{Steenrod algebra action on some elements of the skyline basis for $B\si_{4}$.  
The second calculation uses the first through the Cartan formula, with the
term $(\gamma_{1,1}^{2} \odot \gamma_{1,1}) \cdot \gamma_{2,1}$ being zero.  }\label{F:steen}
\end{figure}

\subsection{Madsen's theorem revisited}

One of the first questions to ask about an algebra $R$ over the Steenrod algebra is its vector space of indecomposables,  which
we denote $\indec_{\aalg} R$.  If $R$ is the cohomology of a space,
it is this vector space which can evaluate non-trivially on the Hurewicz homomorphism from homotopy to homology.  
In the case of
the infinite symmetric group (and thus $\Omega^{\infty} S^{\infty}$), this question is reduced to one in invariant theory.
This connection was first made by Madsen forty years ago \cite{Mads75}, 
but there is still much to learn about the resulting algebraic question \cite{HuPe98}.

Our starting point is \refT{naka}, which immediately determines the indecomposables of $H^{*}(B \si_{\infty})$ as an algebra.

\begin{defn}
Define the width splitting of $\indec_{\al} H^{*}(B\si_{\infty})$, the algebra indecomposables of $H^{*}(B \si_{\infty})$,
 by letting $W_{k}$ denote the span of indecomposables which are represented by single
columns of width $2^{k-1}$.  Define the width filtration through the increasing sums $\bigoplus_{k=1}^{n} W_{k}$.
\end{defn}

We have the following remarkable connection between topology and invariant theory, which goes back at least to work of Selick (and Cohen and Peterson) \cite{Seli82}.

\begin{thm}\label{T:dickson1}
The associated graded of the width filtration on $\indec_{\al} H^{*}(B\si_{\infty})$ 
is isomorphic as ${\mathcal{A}}$-modules to the direct sum of Dickson algebras
$\bigoplus_{n} \F_{2}[x_{1}, \ldots, x_{n}]^{GL_{n}(\F_{2})}$.
\end{thm}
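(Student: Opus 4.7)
The plan is to construct, for each $k$, an explicit $\mathcal{A}$-linear isomorphism between the graded piece $W_k$ and the $k$-th Dickson algebra $\F_2[x_1,\ldots,x_k]^{GL_k(\F_2)}$ via restriction to the transitive elementary abelian subgroup $V_k\subset \si_{2^k}$. First I would unpack $W_k$ using \refT{naka}: modulo lower filtration, a single column of width $2^{k-1}$ is a polynomial monomial in the generators $\gamma_{\ell,2^{k-\ell}}$ for $\ell=1,\ldots,k$, each of degree $2^{k-\ell}(2^\ell-1)$. These degrees match exactly those of the Dickson generators $d_{k-\ell,\ell}$.

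Next I would define $\phi_k\colon W_k\to \F_2[x_1,\ldots,x_k]^{GL_k(\F_2)}$ by restricting from $H^*(B\si_{2^k})$ to $H^*(BV_k)$, landing in the $GL_k(\F_2)$-invariants by naturality of the Weyl group action. By \refT{maptodickson}, each generator $\gamma_{\ell,2^{k-\ell}}$ restricts precisely to the Dickson invariant $d_{k-\ell,\ell}$. Since both sides are polynomial on $k$ generators of the same degrees, $\phi_k$ is a degree-preserving isomorphism of graded algebras; summing over $k$ yields the asserted additive identification.

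Finally, $\mathcal{A}$-equivariance reduces by the Cartan formula to checking on the generators $\gamma_{\ell,2^{k-\ell}}$, where one compares the formula for $\sq^i \gamma_{\ell,2^{k-\ell}}$ given by \refT{steenact} against Hu'ng's formula \refE{hung} for $\sq^i d_{k-\ell,\ell}$. The main obstacle will be confirming that the "stacking" description of \refT{steenact} produces only diagrams whose total widths agree with (or fall strictly below) $2^{k-1}$, so that after passing to the associated graded the two Steenrod formulas agree term for term and the isomorphism is preserved by the full $\mathcal{A}$-action.
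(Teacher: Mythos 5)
Your proposal is correct and follows the paper's argument: identify the generators via Theorem~\ref{C:maptodickson} and reduce the $\mathcal{A}$-module comparison, through the Cartan formula for both products, to matching \refT{steenact} against Hu'ng's formula (\refE{hung}) on the classes $\gamma_{\ell,2^{k-\ell}}$. The paper resolves your stated ``main obstacle'' just as you anticipate, observing that exactly one term of $\sq^i\gamma_{\ell,2^{k-\ell}}$ in \refT{steenact} consists of a single column and reproduces Hu'ng's formula, while every other term has more than one column and is therefore decomposable (by \refT{naka}) or of lower width, hence vanishes in the associated graded of $\indec_{\al}H^{*}(B\si_{\infty})$.
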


\begin{proof}
In the description for Steenrod action on the Hopf ring generators $\gamma_{\ell, 2^{k}}$ of \refT{steenact} there is exactly
one term in which there is just one column.
This term corresponds to the formula for squares on Dickson generators as in Equation~\ref{E:hung} (by replacing
$d_{k, \ell}$ by $\gamma_{\ell, 2^{k}}$).  All other terms have more than one column, and thus modulo decomposables
are lower in the width filtration.
\end{proof}

The fact that for an algebra $R$ over the Steenrod algebra, the quotient map induces an isomorphism of graded vector spaces
$\indec_{\aalg} R \cong \indec_{\amod} (\indec_{\al} R)$ implies that
$\indec_{\aalg} H^{*}(\Omega^{\infty}S^{\infty})$ is a quotient of  $\bigoplus_{n} \indec_{\amod} \F_{2}[x_{1}, \ldots, x_{n}]^{GL_{n}(\F_{2})}.$
To date, the most extensive calculations have been of the dual primitives in homology by Wellington \cite{Well82} and recently by 
Zare \cite{Zare11}.  The Steenrod structure on the algebra indecomposables was put to great use by Campbell, Cohen, Peterson and Selick \cite{CPS86, CCPS86}.  The skyline basis in cohomology gives a distinct approach, which may be especially useful in tandem with homology.

\bibliographystyle{amsplain}
\bibliography{references}

\end{document}